
\documentclass{amsart}
\usepackage{amssymb}
\usepackage{amsfonts}
\usepackage{amsmath}

\setcounter{MaxMatrixCols}{10}

\newtheorem{theorem}{Theorem}[section]
\newtheorem{proposition}{Proposition}[section]
\newtheorem{lemma}[theorem]{Lemma}
\theoremstyle{definition}

\theoremstyle{remark}

\theoremstyle{corollary}

\numberwithin{equation}{section}

\begin{document}
\title[Quasilinear singular elliptic systems with variable exponents]{%
Existence and a priori estimates of solutions for quasilinear singular
elliptic systems with variable exponents}
\subjclass[2010]{35J75; 35J48; 35J92}
\keywords{$p(x)$-Laplacian, variable exponents, fixed point, singular
system, regularity, boundedness.}

\begin{abstract}
This article sets forth results on the existence, a priori estimates and
boundedness of positive solutions of a singular quasilinear systems of
elliptic equations involving variable exponents. The approach is based on
Schauder's fixed point Theorem. A Moser iteration procedure is also obtained
for singular cooperative systems involving variable exponents establishing a
priori estimates and boundedness of solutions.
\end{abstract}

\author{Abdelkrim Moussaoui}
\address{Abdelkrim Moussaoui\\
Biology department, A. Mira Bejaia University, Targa Ouzemour 06000 Bejaia,
Algeria}
\email{abdelkrim.moussaoui@univ-bejaia.dz}
\author{Jean V\'{e}lin}
\address{Jean V\'{e}lin\\
D\'{e}partement de Math\'{e}matiques et Informatique, Laboratoire LAMIA,
Universit\'{e} des Antilles, Campus de Fouillole 97159 Pointe-\`{a}- Pitre,
Guadeloupe (FWI)}
\email{jean.velin@univ-antilles.fr}
\maketitle

\section{Introduction}

In the present paper we focus on the system of quasilinear elliptic equations%
\begin{equation}
\left\{ 
\begin{array}{ll}
-\Delta _{p(x)}u=f(u,v) & \text{in }\Omega \\ 
-\Delta _{q(x)}v=g(u,v) & \text{in }\Omega \\ 
u,v>0 & \text{in }\Omega \\ 
u,v=0 & \text{on }\partial \Omega ,%
\end{array}%
\right.  \tag{$P$}  \label{p}
\end{equation}%
on a bounded domain $\Omega $ in $%
\mathbb{R}
^{N}$ $\left( N\geq 2\right) $ with Lipschitz boundary $\partial \Omega $,
which exhibits a singularity at zero. Here $\Delta _{p(x)}$ (resp. $\Delta
_{q(x)})$ stands for the $p(x)$-Laplacian (resp. $q(x)$-Laplacian)
differential operator on $W_{0}^{1,p(x)}(\Omega )$ (resp. $%
W_{0}^{1,q(x)}(\Omega )$) with $p,q:\Omega \rightarrow \lbrack 1,\infty ),$ 
\begin{equation}
\begin{array}{l}
1<p^{-}\leq p^{+}<N\text{ \ and \ }1<q^{-}\leq q^{+}<N,%
\end{array}
\label{33}
\end{equation}%
which satisfy the log-H\"{o}lder continuous condition, i.e., there is
constants $C_{1},C_{2}>0$ such that%
\begin{equation}
\begin{array}{l}
|p(x_{1})-p(y_{1})|\leq \frac{C_{1}}{-\ln |x_{1}-y_{1}|}\text{ \ and \ }%
|q(x_{2})-q(y_{2})|\leq \frac{C_{2}}{-\ln |x_{2}-y_{2}|},%
\end{array}
\label{16}
\end{equation}%
for every $x_{i},y_{i}\in \Omega $ with $|x_{i}-y_{i}|<1/2$, $i=1,2$.

Throught out this paper, we denote by $p^{\ast }$ and $q^{\ast }$ the
Sobolev critical exponents 
\begin{equation*}
\begin{array}{l}
p^{\ast }(x)=\frac{Np(x)}{N-p(x)}\text{ \ and \ }q^{\ast }(x)=\frac{Nq(x)}{%
N-q(x)}%
\end{array}%
\end{equation*}%
and we set%
\begin{equation*}
\begin{array}{l}
s^{-}=\inf_{x\in \Omega }s(x)\text{ \ and \ }s^{+}=\sup_{x\in \Omega }s(x).%
\end{array}%
\end{equation*}

A solution $(u,v)\in W_{0}^{1,p(x)}(\Omega )\times W_{0}^{1,q(x)}(\Omega )$
of problem (\ref{p}) is understood in the weak sense, that is, it satisfies 
\begin{equation}
\left\{ 
\begin{array}{l}
\int_{\Omega }|\nabla u|^{p(x)-2}\nabla u\nabla \varphi \,dx=\int_{\Omega
}f(u,v)\varphi \,dx \\ 
\int_{\Omega }|\nabla v|^{q(x)-2}\nabla v\nabla \psi \,dx=\int_{\Omega
}g(u,v)\psi \,dx\text{,}%
\end{array}%
\right.  \label{3}
\end{equation}%
for all $(\varphi ,\psi )\in W_{0}^{1,p(x)}(\Omega )\times
W_{0}^{1,q(x)}(\Omega )$.

\bigskip

Nonlinear boundary value problems involving $p(x)$-Laplacian operator are
mathematically challenging and important for applications. Their study is
stimulated by their applications in physical phenomena related to
electrorheological fluids and image restorations, see for instance \cite%
{Acerbi1,Acerbi2,Chen,Ru}. When $p(x)\equiv p$ and $q(x)\equiv q$ are
constant functions, $\Delta _{p(x)}$ and $\Delta _{q(x)}$ coincide with the
well-known $p$-Laplacian and $q$-Laplacian operators. However, it is worth
pointing out that $p(x)$-Laplacian operator possesses more complicated
nonlinearity than $p$-Laplacian since it is inhomogeneous and in general, it
has no first eigenvalue, that is, the infimum of the eigenvalues of $p(x)$%
-Laplacian equals $0$ (see, e.g., \cite{FZZ2, MV}). This point constitute a
serious technical difficulty in the study of problem (\ref{p}), for which
topological methods are difficult to apply. Another serious difficulty
encountered in studying system (\ref{p}) is that the nonlinearities $f(u,v)$
and $g(u,v)$ can exhibit singularities when the variables $u$ and $v$
approach zero. Specifically, we assume that $f,g:(0,+\infty )\times
(0,+\infty )\rightarrow (0,+\infty ),$ are continuous functions satisfying
the conditions:

\begin{description}
\item[\textrm{(H.}$f$\textrm{)}] 
\begin{equation*}
f(s_{1},s_{2})\leq m_{1}(1+s_{1}^{\alpha _{1}(x)})(1+s_{2}^{\beta _{1}(x)})\ 
\text{ for all }s_{1},s_{2}>0,
\end{equation*}%
with a constant $m_{1}>0$ and continuous functions $\alpha _{1},\beta _{1}:%
\overline{\Omega }\longrightarrow 
\mathbb{R}
^{\ast }$.

\item[\textrm{(H.}$g$\textrm{)}] 
\begin{equation*}
g(s_{1},s_{2})\leq m_{2}(1+s_{1}^{\alpha _{2}(x)})(1+s_{2}^{\beta _{2}(x)})%
\text{ \ for all }s_{1},s_{2}>0,
\end{equation*}%
with a constant $m_{2}>0$ and continuous functions $\alpha _{2},\beta _{2}:%
\overline{\Omega }\longrightarrow 
\mathbb{R}
^{\ast }$.
\end{description}

\bigskip

We explicitly observe that under assumptions \textrm{(H.}$f$\textrm{)} and%
\textrm{\ (H.}$g$) and depending on the sign of the variable exponents $%
\alpha _{i}(\cdot )$ and $\beta _{i}(\cdot ),$ $i=1,2$, system (\ref{p})
presents two types of complementary structures: 
\begin{equation}
\alpha _{2}^{-},\beta _{1}^{-}>0\text{ \ (cooperative structure),}
\label{h1}
\end{equation}%
\begin{equation}
\alpha _{2}^{+},\beta _{1}^{+}<0\text{ \ (competitive structure).}
\label{h2}
\end{equation}

If (\ref{h1}) holds, we assume

\begin{description}
\item[\textrm{H(}$f,g$\textrm{)}$_{1}$] 
\begin{equation*}
\begin{array}{c}
\sigma :=\min
\{\inf_{s_{1},s_{2}>0}f(s_{1},s_{2}),\inf_{s_{1},s_{2}>0}g(s_{1},s_{2})\}>0.%
\end{array}%
\end{equation*}
\end{description}

This assumption is useful in the subsequent estimates keeping the values of $%
f(s_{1},s_{2})$ and $g(s_{1},s_{2})$ above zero. In the case of competitive
system (\ref{p}), in addition of (\ref{h2}), we assume

\begin{description}
\item[\textrm{H(}$f,g$\textrm{)}$_{2}$] For all constant $M>0$ it hold%
\begin{equation*}
\begin{array}{l}
\lim_{s_{1}\rightarrow 0}\frac{f(s_{1},s_{2})}{s_{1}^{p^{-}-1}}=+\infty 
\text{ \ for all }s_{2}\in (0,M)%
\end{array}%
\end{equation*}%
and%
\begin{equation*}
\begin{array}{l}
\lim_{s_{2}\rightarrow 0}\frac{g(s_{1},s_{2})}{s_{2}^{q^{-}-1}}=+\infty 
\text{ \ for all }s_{1}\in (0,M).%
\end{array}%
\end{equation*}
\end{description}

\bigskip

This type of problem is rare in the literature. Actually, according to our
knowledge, the only class of singular problems incorporated in statement (%
\ref{p}) patterns the system for $f(u,v)=u^{\alpha _{1}(x)}v^{\beta _{1}(x)}$
and $g(u,v)=u^{\alpha _{2}(x)}v^{\beta _{2}(x)}$ was studied recently by
Alves \& Moussaoui \cite{AM}. The authors obtained the existence of
solutions through new theorems involving sub and supersolutions for singular
systems with variable exponents by dealing with cooperative and competitive
structures. However, when the exponent variable functions $p(\cdot ),q(\cdot
),\alpha _{i}(\cdot )$ and $\beta _{i}(\cdot )$, $i=1,2$, are reduced to be
constants, problem (\ref{p}) have been thoroughly investigated, we refer to 
\cite{MM}\ for system (\ref{p}) with cooperative structure, while we quote 
\cite{MM2, MM3} for the study of competitive structure in (\ref{p}).
Furthermore, in the constant exponent context, the singular problem (\ref{p}%
) arise in several physical situations such as fluid mechanics,
pseudoplastics flow, chemical heterogeneous catalysts, non- Newtonian
fluids, biological pattern formation, for more details about this subject,
we cite the papers of Fulks \& Maybe \cite{FM}, Callegari \& Nashman \cite%
{CN1,CN2} and the references therein.

\bigskip

Our goal is to establish the existence and regularity of (positive)
solutions for problem (\ref{p}) by processing the cases (\ref{h1}) and (\ref%
{h2}) related to the structure of (\ref{p}). Our main results are stated as
follows.

\begin{theorem}
\label{T1}Let assumptions \textrm{(H.}$f$\textrm{), (H.}$g$), \textrm{H(}$f,g
$\textrm{)}$_{1}$ and (\ref{h1}) hold with%
\begin{equation}
\begin{array}{c}
\beta _{1}(x)\leq \frac{q^{\ast }(x)}{p^{\ast }(x)}(p^{\ast }(x)-1),\text{ \ 
}\alpha _{2}(x)\leq \frac{p^{\ast }(x)}{q^{\ast }(x)}(q^{\ast }(x)-1)%
\end{array}
\label{c1*}
\end{equation}%
and%
\begin{equation}
\left\{ 
\begin{array}{l}
-\frac{1}{N}<\alpha _{1}^{-}\leq \alpha _{1}^{+}<0 \\ 
-\frac{1}{N}<\beta _{2}^{-}\leq \beta _{2}^{+}<0.%
\end{array}%
\right.   \label{c1}
\end{equation}%
Then, problem (\ref{p}) possesses at least one (positive) solution in $C^{1}(%
\overline{\Omega })\times C^{1}(\overline{\Omega })$ satisfying%
\begin{equation}
u(x),v(x)\geq c_{0}d(x),  \label{44}
\end{equation}%
where $d(x):=d(x,\partial \Omega )$ and $c_{0}$ is a positive constant.
\end{theorem}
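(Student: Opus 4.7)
The plan is to apply Schauder's fixed point theorem on a cone interval in $C_0(\overline{\Omega})\times C_0(\overline{\Omega})$. For constants $c_0, M>0$ to be fixed at the end, I would work on the closed convex set
\[
K = \{(z,w) \in C_0(\overline{\Omega})^2 : c_0 d(x) \leq z(x), w(x) \leq M \text{ for all } x \in \Omega\},
\]
and define $\mathcal{T}(z,w) = (u,v)$ as the unique positive weak solution pair of the decoupled Dirichlet problems $-\Delta_{p(x)} u = f(z,w)$ and $-\Delta_{q(x)} v = g(z,w)$. Well-posedness of each decoupled equation is classical provided the right-hand side is in a suitable Lebesgue space: on $K$, the lower bound $z,w\geq c_0 d$ combined with $-1/N < \alpha_1, \beta_2 < 0$ in (\ref{c1}) makes the singular factors $z^{\alpha_1(x)}$ and $w^{\beta_2(x)}$ integrable with a safe margin, so that (H.$f$), (H.$g$) yield $f(z,w), g(z,w) \in L^{N+\varepsilon}(\Omega)$.

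Invariance $\mathcal{T}(K) \subset K$ requires two a priori estimates. For the upper bound $\|u\|_\infty, \|v\|_\infty \leq M$, I would run the Moser iteration advertised in the abstract: the subcritical coupling (\ref{c1*}) is exactly the threshold keeping the mixed exponents $\beta_1(x)$ (resp.\ $\alpha_2(x)$) below the $q^{\ast}/p^{\ast}$-compatibility line, which prevents the iteration from reaching a critical level and produces an $L^{\infty}$ bound depending sub-linearly on $\|w\|_{\infty}$ (resp.\ $\|z\|_{\infty}$). For the lower bound $u, v \geq c_0 d$, I would use H($f,g$)$_1$, which gives $f(z,w), g(z,w) \geq \sigma > 0$ uniformly on $K$, and then invoke a Hopf-type boundary point lemma for the $p(x)$-Laplacian together with the strong maximum principle to obtain a pointwise linear minorant proportional to $d(x)$ with a constant depending only on $\sigma$, $p(\cdot)$, $q(\cdot)$ and $\Omega$. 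Fixing $M$ first from the Moser step and then $c_0$ from the Hopf step (the latter being independent of the choice of $M$) closes the self-consistency.

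For compactness and continuity of $\mathcal{T}$ I would appeal to the global $C^{1,\gamma}(\overline{\Omega})$ regularity theory for $p(x)$-Laplacian equations with bounded source (Fan-type results), which yields a uniform $C^{1,\gamma}$ bound on $\mathcal{T}(K)$, hence relative compactness in $C_0(\overline{\Omega})$. Continuity of $\mathcal{T}$ in the $C_0$-topology follows from stability of the $p(x)$-Laplacian Dirichlet problem under $L^{r}$-convergence of the data, combined with dominated convergence for $f(z_n,w_n) \to f(z,w)$: the uniform lower bound $z_n, w_n \geq c_0 d$ keeps $(z_n,w_n)$ away from the singular set, so the singular factors are dominated by their envelopes. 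Schauder's theorem then produces a fixed point $(u,v)$ of $\mathcal{T}$, which is a weak solution of (\ref{p}) in $C^{1}(\overline{\Omega})^{2}$ satisfying (\ref{44}) by construction.

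The main obstacle is closing the Moser iteration in the variable-exponent setting: the inhomogeneity of $\Delta_{p(x)}$ and the absence of a first eigenvalue rule out the eigenvalue-based arguments available for constant exponents, and the bootstrap must be carried out using the variable-exponent Sobolev embeddings, where (\ref{c1*}) is expected to appear precisely as the algebraic condition making the recursion contracting. The companion difficulty is ensuring that the negative-exponent (singular) factors, controlled by (\ref{c1}), do not destroy the sub-linear dependence of the resulting $L^{\infty}$ estimate on $\|z\|_{\infty}$ and $\|w\|_{\infty}$; this sub-linearity is what ultimately permits the choice of $M$ making $K$ invariant, so it is the crux of the proof.
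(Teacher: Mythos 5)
Your overall architecture (Schauder on an order interval $[c_0d,M]$, Hopf-type comparison for the lower barrier, $C^{1,\gamma}$ regularity for compactness) matches the paper in spirit, but there is a genuine gap at the step you yourself identify as the crux: the invariance of the upper bound $M$ for the \emph{decoupled, untruncated} map $\mathcal{T}$. For $(z,w)\in K$ with $w\le M$, hypothesis \textrm{(H.}$f$\textrm{)} only gives $f(z,w)\lesssim (1+(c_0d)^{\alpha_1})(1+M^{\beta_1^+})$, so the solution of $-\Delta_{p(x)}u=f(z,w)$ obeys roughly $\|u\|_\infty\le C\,M^{\beta_1^+/(p^--1)}$. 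Closing $\|u\|_\infty\le M$ would require $\beta_1^+<p^--1$ (or a smallness device), but the theorem only assumes the much weaker coupling condition (\ref{c1*}), under which $\beta_1$ may greatly exceed $p^--1$. The ``sub-linear dependence on $\|w\|_\infty$'' you attribute to the Moser iteration is therefore not available: the paper's Moser estimate (Theorem \ref{T2}) is an a priori bound for solutions of the \emph{coupled} system (\ref{p}), with the right-hand side controlled by norms of the solution pair itself, not a contraction estimate for the one-step solution operator. As stated, your fixed-point set $K$ is not invariant and Schauder's theorem cannot be applied.

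The paper avoids this by a different decomposition. It first proves, via Moser iteration plus regularity, a universal a priori bound $R$ in $C^{1,\gamma}(\overline\Omega)$ valid for \emph{every} solution of (\ref{p}) (Theorem \ref{T2} and Proposition \ref{P3}), together with the barrier $u,v\ge c_0d$. It then freezes and \emph{truncates} the arguments of the nonlinearities, setting $\tilde z_i=\min\{\max\{z_i,c_0d\},R\}$, so that the auxiliary problem $(P_z)$ has a right-hand side uniformly dominated by $C\,d(x)^{\alpha_1^-}$ (respectively $C'\,d(x)^{\beta_2^-}$) independently of $(z_1,z_2)$; the integrability condition (\ref{c1}) then gives a uniform $C^{1,\gamma}$ bound $L_R$ on the image, so invariance of the Schauder set is automatic rather than obtained by tuning $M$. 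Finally the fixed point of the truncated problem is identified with a solution of (\ref{p}) using the a priori estimates. To repair your argument you would need to either adopt this truncation device or impose the extra sublinearity $\beta_1^+<p^--1$, $\alpha_2^+<q^--1$, which is not part of the hypotheses of Theorem \ref{T1}.
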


\begin{theorem}
\label{T3}Under assumptions \textrm{(H.}$f$\textrm{), (H.}$g$), \textrm{H(}$%
f,g$\textrm{)}$_{2}$ and (\ref{h2}) with%
\begin{equation}
\max \{-\frac{1}{N},-\alpha _{1}^{-}\}<\beta _{1}^{-}\leq \beta
_{1}^{+}<0<\alpha _{1}^{-}\leq \alpha _{1}^{+}<p^{-}-1  \label{c2}
\end{equation}%
and%
\begin{equation}
\max \{-\frac{1}{N},-\beta _{2}^{-}\}<\alpha _{2}^{-}\leq \alpha
_{2}^{+}<0<\beta _{2}^{-}\leq \beta _{2}^{+}<q^{-}-1,  \label{c2*}
\end{equation}%
problem (\ref{p}) possesses at least one (positive) solution $(u,v)$ in $%
C^{1}(\overline{\Omega })\times C^{1}(\overline{\Omega })$ satisfying (\ref%
{44}).
\end{theorem}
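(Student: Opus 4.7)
The plan is to obtain $(u,v)$ as a fixed point of a truncated version of (\ref{p}), in the same spirit as for Theorem \ref{T1}, but with an invariant set designed to cope with the competitive singularities. Concretely, I would introduce an auxiliary problem in which the singular factors with negative exponents ($v^{\beta_1(x)}$ in the first equation and $u^{\alpha_2(x)}$ in the second) are cut off from below by a positive barrier, and then apply Schauder's fixed point theorem on $C(\overline{\Omega})\times C(\overline{\Omega})$. Because (\ref{h2}) rules out any monotone iteration, the whole argument must be organized around a single compact convex invariant set of the form $[\underline{u},M]\times[\underline{v},M]$.

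\medskip

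The first step is to produce the lower barriers $\underline{u}(x),\underline{v}(x)\geq c_0 d(x)$. Here hypothesis $\mathrm{H}(f,g)_2$ is essential: the blow-up of $f(s_1,s_2)/s_1^{p^--1}$ as $s_1\to 0$, uniform in $s_2$ on bounded intervals, together with a Hopf-type lower bound on the first eigenfunction of $-\Delta_{p(x)}$, allows one to choose $c_0>0$ so small that $c_0 d(x)$ is a subsolution of the first equation whenever $v$ is trapped in $[\underline{v},M]$, and symmetrically for the second equation. Once this is done, the bound (\ref{44}) is built into the invariant set; moreover, the singular factors obey $v^{\beta_1(x)}\lesssim d(x)^{\beta_1^-}$ and $u^{\alpha_2(x)}\lesssim d(x)^{\alpha_2^-}$, whose summability near $\partial\Omega$ is exactly what the conditions $\beta_1^->-1/N$ and $\alpha_2^->-1/N$ in (\ref{c2})-(\ref{c2*}) deliver.

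\medskip

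With the barriers in hand, I define $T(\bar u,\bar v)=(u,v)$ where $u,v$ weakly solve the decoupled problems
\begin{equation*}
-\Delta_{p(x)} u = f(\bar u,\bar v),\qquad -\Delta_{q(x)} v = g(\bar u,\bar v),
\end{equation*}
with zero Dirichlet data. On $[\underline{u},M]\times[\underline{v},M]$, hypotheses (H.$f$) and (H.$g$) combined with the boundary integrability just described put the right-hand sides in $L^r(\Omega)$ with $r$ large, so existence, uniqueness, and $C^{1,\alpha}(\overline{\Omega})$ regularity of $u,v$ follow from the standard theory of the $p(x)$-Laplacian. Weak comparison against $\underline{u},\underline{v}$ keeps $T(\bar u,\bar v)$ above the lower barriers, so the only remaining point for invariance is the upper bound $\|u\|_\infty,\|v\|_\infty\leq M$. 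This is where the subcritical conditions $\alpha_1^+<p^--1$ and $\beta_2^+<q^--1$ enter, via an $L^\infty$ a priori estimate (a Moser iteration adapted to variable exponents, which will absorb the singular weights $v^{\beta_1(x)}$, $u^{\alpha_2(x)}$ into the test function). Compactness of $T$ then follows from the uniform $C^{1,\alpha}$ estimate and Arzel\`a--Ascoli, continuity from the stability of $-\Delta_{p(x)}$ and $-\Delta_{q(x)}$ in $L^r$, and Schauder's theorem yields a fixed point $(u,v)$. The $C^1(\overline\Omega)$ conclusion is inherited from the $C^{1,\alpha}(\overline\Omega)$ regularity above.

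\medskip

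The main obstacle I anticipate is reconciling the two opposing forces in the competitive regime: the positive growth $0<\alpha_1^-\leq\alpha_1^+<p^--1$ of $f$ in its first argument calls for an iteration-type a priori bound, but that iteration must simultaneously absorb the singular weight $v^{\beta_1(x)}\sim d(x)^{\beta_1^-}$ produced by pinning $v$ against the lower barrier. The cross-couplings $\beta_1^->-\alpha_1^-$ and $\alpha_2^->-\beta_2^-$ in (\ref{c2})-(\ref{c2*}) are precisely what keeps the composite $u^{\alpha_1}v^{\beta_1}$ summable at the right scale near $\partial\Omega$, and \emph{balancing these estimates uniformly in the truncation level and for both equations at once} is the delicate point.
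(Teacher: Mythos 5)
Your overall architecture---freeze the arguments, solve the decoupled problems, and run Schauder on an order interval whose lower edge is $\gtrsim d(x)$---is essentially the paper's. The one real divergence is the upper edge of the invariant set: you take a constant $M$ and propose to recover $\left\Vert u\right\Vert _{\infty }\leq M$ by an a priori $L^{\infty }$/Moser estimate, whereas the paper builds genuine supersolutions $\overline{u},\overline{v}$ solving the auxiliary singular problems (\ref{80})--(\ref{80*}), which decay like $d(x)^{\theta _{i}}$ near $\partial \Omega $ with $\theta _{1}\geq -\beta _{1}^{-}/\alpha _{1}^{-}$, and verifies $-\Delta _{p(x)}\overline{u}\geq f(\overline{u},\underline{v})$ in Proposition \ref{P1}. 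That supersolution step is where the cross-coupling conditions $\beta _{1}^{-}>-\alpha _{1}^{-}$ and $\alpha _{2}^{-}>-\beta _{2}^{-}$ actually enter; in your scheme the boundary summability only uses $\beta _{1}^{-},\alpha _{2}^{-}>-1/N$, so your closing remark misplaces the role of those hypotheses, which is a sign the upper-bound step has not really been worked out.

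Two concrete gaps. First, you cannot anchor the lower barriers on ``a Hopf-type lower bound on the first eigenfunction of $-\Delta _{p(x)}$'': as the introduction recalls, the infimum of the spectrum of the $p(x)$-Laplacian is in general $0$ and no first eigenfunction exists; nor is $-\Delta _{p(x)}(c_{0}d(x))$ a tractable object. The paper instead takes $\underline{u},\underline{v}$ as solutions of the torsion-type problems (\ref{83})--(\ref{83*}) with right-hand side $\pm \lambda ^{-1}$, bounds them below by $c_{3}d(x)$ via \cite{Fan,FZZ}, and checks the subsolution inequality separately on $\Omega _{\delta }$ (where the right-hand side is negative, so the inequality is trivial) and on $\Omega \backslash \overline{\Omega }_{\delta }$ (where H$(f,g)_{2}$ is invoked). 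Second, and more seriously, your construction is circular: the height of the lower barrier must be chosen \emph{after} $M$, since H$(f,g)_{2}$ only yields $f(s_{1},s_{2})\geq s_{1}^{p^{-}-1}$ for $s_{2}<M$ and $s_{1}<\rho (M)$, while the constant in the $L^{\infty }$ estimate that is supposed to produce $M$ contains a factor of order $c_{0}^{\beta _{1}^{-}}$ coming from $v^{\beta _{1}(x)}\leq (c_{0}d(x))^{\beta _{1}(x)}$, which blows up as $c_{0}\rightarrow 0$. You flag this as ``the delicate point'' but do not resolve it; the paper does, by tying both barriers to the single parameter $\lambda $ with the explicit rates $\underline{u}\lesssim \lambda ^{-1/(p^{+}-1)}$ and $\overline{u}\lesssim \lambda ^{1/(p^{-}-1)}$, so that all four inequalities of Proposition \ref{P1} close simultaneously for $\lambda $ large. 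Until the barrier construction and this interdependence are pinned down, the proposal remains a plan rather than a proof.
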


\bigskip

The main technical difficulty consists in the presence of $p(x)$-Laplacian
and $q(x)$-Laplacian operators in the principle parts of equations in (\ref%
{p}) on the one hand and, on the other the presence\ of singular terms
through variable exponents that can occur under hypotheses \textrm{(H.}$f$%
\textrm{) }and \textrm{(H.}$g$). Under cooperative structure (\ref{h1}), by
adapting Moser iterations procedure to problem (\ref{p}), together with an
adequate truncation, we prove a priori estimates for an arbitrary solution
of (\ref{p}). In particular, it provides that all solution $(u,v)$ of (\ref%
{p}) are bounded in $L^{\infty }(\Omega )\times L^{\infty }(\Omega )$.
Taking advantage of this boundedness and applying Schauder's fixed point
Theorem we obtain the existence of a solution of problem (\ref{p}). To the
best of our knowledge, it is for the first time when Moser iterations method
is applied for problems with variable exponents.

For system (\ref{p}) subjected to competitive structure (\ref{h2}), we
develop some comparison arguments which provide a priori estimates on
solutions of (\ref{p}). In turn, these estimates enable us to obtain our
main result by applying the Schauder's fixed point theorem. It is worth
noting that besides our method is different from that used by Alves \&
Moussaoui \cite{AM}, our assumptions, precisely \textrm{H(}$f,g$\textrm{)}$%
_{1}$, (\ref{c2}) and (\ref{c2*}), are not satisfied by hypotheses
considered there.

We indicate simple examples showing the applicability of Theorems \ref{T1}
and \ref{T3}. Related to system (\ref{p}) under assumptions above, we can
handle singular cooperative systems of the form%
\begin{equation*}
\left\{ 
\begin{array}{l}
-\Delta _{p(x)}u=(u^{\alpha _{1}(x)}+1)(v^{\beta _{1}(x)}+1)\text{ in }\Omega
\\ 
-\Delta _{q(x)}v=(u^{\alpha _{2}(x)}+1)(v^{\beta _{2}(x)}+1)\text{ in }\Omega
\\ 
u,v>0\text{ \ \ \ in }\Omega \\ 
u,v=0\text{ \ \ \ on }\partial \Omega ,%
\end{array}%
\right.
\end{equation*}%
and singular competitive systems of type%
\begin{equation*}
\left\{ 
\begin{array}{l}
-\Delta _{p(x)}u=v^{\alpha _{1}(x)}+v^{\beta _{1}(x)}\text{ in }\Omega \\ 
-\Delta _{q(x)}v=u^{\alpha _{2}(x)}+u^{\beta _{2}(x)}\text{ in }\Omega \\ 
u,v>0\text{ \ \ \ in }\Omega \\ 
u,v=0\text{ \ \ \ on }\partial \Omega ,%
\end{array}%
\right.
\end{equation*}%
with variable exponents $\alpha _{1},\alpha _{2},\beta _{1},\beta _{1}$ as
in hypotheses (\ref{c1*}), (\ref{c1}) and (\ref{c2}), (\ref{c2*}),
respectively.

\bigskip

The rest of this article is organized as follows. Section \ref{S2}\ deals
with a priori estimates and regularity of solutions of cooperative system (%
\ref{p}), whereas Section \ref{S3} presents comparison properties of
competitive system (\ref{p}). Sections \ref{S4} and \ref{S5} contain the
proof of Theorems \ref{T1} and \ref{T3}.

\section{A priori estimates and regularity}

\label{S2}

Let $L^{p(x)}(\Omega )$ be the generalized Lebesgue space that consists of
all measurable real-valued functions $u$ satisfying%
\begin{equation*}
\begin{array}{l}
\rho _{p(x)}(u)=\int_{\Omega }|u(x)|^{p(x)}dx<+\infty ,%
\end{array}%
\end{equation*}%
endowed with the Luxemburg norm%
\begin{equation*}
\begin{array}{l}
\left\Vert u\right\Vert _{p(x)}=\inf \{\tau >0:\rho _{p(x)}(\frac{u}{\tau }%
)\leq 1\}.%
\end{array}%
\end{equation*}%
The variable exponent Sobolev space $W_{0}^{1,p(\cdot )}(\Omega )$ is
defined by%
\begin{equation*}
\begin{array}{l}
W_{0}^{1,p(x)}(\Omega )=\{u\in L^{p(x)}(\Omega ):|\nabla u|\in
L^{p(x)}(\Omega )\}.%
\end{array}%
\end{equation*}%
The norm $\left\Vert u\right\Vert _{1,p(x)}=\left\Vert \nabla u\right\Vert
_{p(x)}$ makes $W_{0}^{1,p(x)}(\Omega )$ a Banach space. On the basis of (%
\ref{16}), the following embedding%
\begin{equation}
\begin{array}{l}
W_{0}^{1,p(x)}(\Omega )\hookrightarrow L^{r(x)}(\Omega )%
\end{array}
\label{20}
\end{equation}%
is continuous with $1<r(x)\leq p^{\ast }(x)$ (see \cite[Corollary 5.3]{D}).

\begin{lemma}
\label{L1}$(i)$ For any $u\in L^{p(x)}(\Omega )$ we have%
\begin{equation*}
\begin{array}{l}
\left\Vert u\right\Vert _{p(x)}^{p^{-}}\leq \rho _{p(x)}(u)\leq \left\Vert
u\right\Vert _{p(x)}^{p^{+}}\text{ \ if \ }\left\Vert u\right\Vert _{p(x)}>1,%
\end{array}%
\end{equation*}%
\begin{equation*}
\begin{array}{l}
\left\Vert u\right\Vert _{p(x)}^{p^{+}}\leq \rho _{p(x)}(u)\leq \left\Vert
u\right\Vert _{p(x)}^{p^{-}}\text{ \ if \ }\left\Vert u\right\Vert
_{p(x)}\leq 1.%
\end{array}%
\end{equation*}

$(ii)$ For $u\in L^{p(x)}(\Omega )\backslash \{0\}$ we have 
\begin{equation}
\left\Vert u\right\Vert _{p(x)}=a\text{ \ if and only if }\rho _{p(x)}(\frac{%
u}{a})=1.  \label{normro}
\end{equation}
\end{lemma}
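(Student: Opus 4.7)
The plan is to derive part $(i)$ from part $(ii)$, so the first task is to prove that the infimum defining the Luxemburg norm is actually attained whenever $u \neq 0$. To that end, I would observe that the map
\[
\tau \longmapsto \rho_{p(x)}\!\left(\tfrac{u}{\tau}\right) = \int_{\Omega} \tau^{-p(x)} |u(x)|^{p(x)} \, dx
\]
is strictly decreasing and continuous on $(0,\infty)$, with limit $0$ as $\tau \to \infty$ and limit $+\infty$ as $\tau \to 0^{+}$; continuity is obtained via the dominated convergence theorem, using that $\tau^{-p(x)}$ is locally bounded in $\tau$. Consequently, there is a unique $\tau_0 > 0$ with $\rho_{p(x)}(u/\tau_0) = 1$, and by monotonicity this $\tau_0$ coincides with the infimum in the definition of $\|u\|_{p(x)}$. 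This yields $(ii)$.

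For part $(i)$, I would set $a := \|u\|_{p(x)}$ and use $(ii)$ to write
\[
1 = \rho_{p(x)}\!\left(\tfrac{u}{a}\right) = \int_{\Omega} a^{-p(x)} |u(x)|^{p(x)} \, dx.
\]
Now I would split into two cases. If $a > 1$, then $a^{-p^{+}} \le a^{-p(x)} \le a^{-p^{-}}$ pointwise, which when inserted in the integral gives $a^{-p^{+}} \rho_{p(x)}(u) \le 1 \le a^{-p^{-}} \rho_{p(x)}(u)$, hence $\|u\|_{p(x)}^{p^{-}} \le \rho_{p(x)}(u) \le \|u\|_{p(x)}^{p^{+}}$. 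If $a \le 1$, the inequalities between $a^{-p^{\pm}}$ and $a^{-p(x)}$ reverse, and the same computation yields the second pair of inequalities.

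The main obstacle, modest as it is, is really just the rigorous justification that the infimum is attained and equals $1$ for $u \neq 0$; everything afterwards is a bookkeeping exercise with the two pointwise bounds on $a^{-p(x)}$. Note that the case $u \equiv 0$ is trivial since $\rho_{p(x)}(u) = 0 = \|u\|_{p(x)}$, and part $(ii)$ is stated only for $u \neq 0$, so the argument above covers all relevant cases.
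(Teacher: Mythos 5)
Your proof is correct: the reduction of $(i)$ to $(ii)$ via the pointwise sandwich $a^{-p^{+}}\le a^{-p(x)}\le a^{-p^{-}}$ (reversed when $a\le 1$), and the attainment of the Luxemburg infimum at the unique $\tau_{0}$ with $\rho_{p(x)}(u/\tau_{0})=1$ (using continuity, strict monotonicity, and the limits of $\tau\mapsto\rho_{p(x)}(u/\tau)$, all of which are legitimate here because $p^{+}<\infty$ and $\rho_{p(x)}(u)<\infty$ by the paper's definition of $L^{p(x)}(\Omega)$), is exactly the standard argument. The paper itself states this lemma without proof, as it is a classical fact in the theory of variable exponent Lebesgue spaces, so there is nothing further to compare against.
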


The next result provides a priori estimates for an arbitrary solution of (%
\ref{p}) subjected to cooperative structure.

\begin{theorem}
\label{T2} Assume that (\ref{h1}) and the growth conditions \textrm{(H.}$f$%
\textrm{)} and\textrm{\ (H.}$g$\textrm{)} hold with 
\begin{equation}
\left\{ 
\begin{array}{l}
\alpha _{1}^{+}<0<\beta _{1}(x)\leq \frac{q^{\ast }(x)}{p^{\ast }(x)}%
(p^{\ast }(x)-1)\text{ } \\ 
\text{\ }\beta _{2}^{+}<0<\alpha _{2}(x)\leq \frac{p^{\ast }(x)}{q^{\ast }(x)%
}(q^{\ast }(x)-1)%
\end{array}%
\right. \text{ \ in }\Omega \text{.}  \label{h3}
\end{equation}%
Then there exist positive constants $C=C(m_{1},\beta _{1},N,\Omega ,p,q)$
and $C^{\prime }=C^{\prime }(m_{2},\alpha _{2},N,\Omega ,p,q)$ such that
every solution $(u,v)\in W_{0}^{1,p(x)}(\Omega )\times W_{0}^{1,q(x)}(\Omega
)$ of (\ref{p}) satisfies the estimate 
\begin{equation}
\begin{array}{c}
\left\Vert u\right\Vert _{\infty }\leq C\max (1,\Vert u\Vert _{p^{\ast
}(x)})^{p^{+}/p^{-}}(1+\max (1,\Vert v\Vert _{q^{\ast }(x)}^{\beta
_{1}^{+}}))^{\frac{1}{(p^{-})^{\ast }-p^{-}}},%
\end{array}
\label{E1}
\end{equation}%
\begin{equation}
\begin{array}{c}
\left\Vert v\right\Vert _{\infty }\leq C^{\prime }\max (1,\Vert v\Vert
_{q^{\ast }(x)})^{q^{+}/q^{-}}(1+\max (1,\Vert u\Vert _{p^{\ast
}(x)}^{\alpha _{2}^{+}}))^{\frac{1}{(q^{-})^{\ast }-q^{-}}}.%
\end{array}
\label{E2}
\end{equation}%
In particular, problem (\ref{p}) has only bounded solutions.
\end{theorem}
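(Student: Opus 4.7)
The plan is to carry out a Moser iteration tailored to the variable-exponent setting, using truncations to handle the singular nonlinearities. The key preliminary observation is that, since $\alpha_1^+<0$ and $\beta_2^+<0$ by (\ref{h3}), for every $s\ge 1$ and every $\alpha(x)<0$ one has $s^{\alpha(x)}\le 1$; combined with (H.$f$) and (H.$g$) this yields $f(u,v)\le 4m_1(1+v^{\beta_1(x)})$ on the set $\{u\ge 1\}$ and $g(u,v)\le 4m_2(1+u^{\alpha_2(x)})$ on $\{v\ge 1\}$. On the complementary sets $u$ and $v$ are bounded by $1$, so proving (\ref{E1})--(\ref{E2}) reduces to controlling the super-level parts of $u$ and $v$ using these regularized nonlinearities.

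I would then set up the iteration as follows. Fix $L>1$ and write $u_L:=\min(u,L)$. For a parameter $\kappa\ge 1$ to be iterated, take $\varphi=u\,u_L^{(\kappa-1)p^-}\in W_0^{1,p(x)}(\Omega)\cap L^\infty(\Omega)$ in the first equation of (\ref{3}). Expanding $\nabla\varphi$ and discarding nonnegative contributions leads to
\begin{equation*}
\bigl(1+(\kappa-1)p^-\bigr)\int_\Omega u_L^{(\kappa-1)p^-}|\nabla u_L|^{p(x)}\,dx \le \int_\Omega f(u,v)\,u\,u_L^{(\kappa-1)p^-}\,dx.
\end{equation*}
The left-hand side is bounded below by $c(\kappa)\|u_L\|_{L^{\kappa(p^-)^*}}^{\kappa p^-}$, up to a lower-order remainder, by combining the chain rule $\nabla(u_L^\kappa)=\kappa u_L^{\kappa-1}\nabla u_L$, the pointwise Young-type bound $|\nabla u_L|^{p^-}\le|\nabla u_L|^{p(x)}+1$, and the constant-exponent Sobolev embedding $W_0^{1,p^-}(\Omega)\hookrightarrow L^{(p^-)^*}(\Omega)$. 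For the right-hand side, the reduction from the first paragraph, H\"older's inequality in variable-exponent Lebesgue spaces and the embedding (\ref{20}) applied together with the sharp hypothesis $\beta_1(x)\le\frac{q^*(x)}{p^*(x)}(p^*(x)-1)$ (which is precisely the integrability condition placing $v^{\beta_1(x)}$ in the dual of the $L^{p^*(x)}$-factor absorbing $u$, with norm controlled by $\|v\|_{q^*(x)}^{\beta_1^+}$ through Lemma \ref{L1}(i)) produce an upper bound of the form $C(\kappa)\bigl(1+\|v\|_{q^*(x)}^{\beta_1^+}\bigr)\|u\|_{L^{\kappa p^-\theta}}^{\kappa p^-}$ for some fixed $\theta<(p^-)^*/p^-$.

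Chaining the two bounds, dividing by the left-hand factor, letting $L\to\infty$ by monotone convergence, and iterating the resulting recurrence $\kappa_{n+1}=\bigl((p^-)^*/(p^-\theta)\bigr)\kappa_n$ in the standard Moser fashion then produces (\ref{E1}): the exponent $1/((p^-)^*-p^-)$ on the $v$-factor is the sum of the resulting geometric series in the exponents, while the factor $\max(1,\|u\|_{p^*(x)})^{p^+/p^-}$ arises from the initial step of the iteration combined with the modular-to-norm conversion in Lemma \ref{L1}(i). The estimate (\ref{E2}) follows by running the symmetric argument starting from the second equation of (\ref{3}), with the roles of $p(x)$, $u$, $\alpha_1$, $\beta_1$ exchanged with $q(x)$, $v$, $\beta_2$, $\alpha_2$. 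Boundedness of solutions is immediate from (\ref{E1})--(\ref{E2}) since $\|u\|_{p^*(x)}$ and $\|v\|_{q^*(x)}$ are finite by the Sobolev embedding (\ref{20}).

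The main technical obstacle is the mismatch between the pointwise exponent $p(x)$ on the left-hand side of the tested identity and the constant Sobolev exponent $p^-$ needed to close the iteration: a direct rewriting of $u_L^{(\kappa-1)p^-}|\nabla u_L|^{p(x)}$ as a constant multiple of $|\nabla(u_L^\alpha)|^{p(x)}$ would force $\alpha$ to depend on $x$, which breaks the chain rule since $p(x)$ is only log-H\"older continuous. One circumvents this by degrading $p(x)$ to $p^-$ via the pointwise Young inequality (at the cost of an $L^1$-remainder that is absorbed into a lower-iteration term) and by tracking the $\kappa$-dependence of the intermediate constants carefully enough that the final product over the iteration converges; it is precisely this reduction that is responsible for the appearance of the exponent $p^+/p^-$ in the modular-norm passage of Lemma \ref{L1}(i).
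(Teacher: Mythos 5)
Your overall architecture coincides with the paper's: restrict to the superlevel set $\{u>1\}$ where the singular factor $u^{\alpha_1(x)}$ is harmless (the paper does this with a cut-off $\phi_\delta$ rather than by decree), truncate at a level $L$ (the paper's $M$), test with a Moser-type power of the truncation, estimate the right-hand side by variable-exponent H\"older using $\beta_1(x)\le\frac{q^\ast(x)}{p^\ast(x)}(p^\ast(x)-1)$, and sum a geometric series to produce the exponent $\frac{1}{(p^-)^\ast-p^-}$. The one genuine implementational difference is on the left-hand side: you degrade $p(x)$ to $p^-$ via $|\nabla u_L|^{p^-}\le|\nabla u_L|^{p(x)}+1$ and invoke the constant-exponent embedding $W_0^{1,p^-}\hookrightarrow L^{(p^-)^\ast}$, whereas the paper keeps the exponent variable, uses $u_M^{k_1^-p^+}\ge u_M^{k_1^-p(x)}$ on $\{u_M>1\}$, and converts modulars to norms through the mean-value identities of Lemma \ref{L1}. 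Either route is workable for Step 1.

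The step that fails is your right-hand side estimate. The hypothesis on $\beta_1$ places $v^{\beta_1(x)}$ exactly in $L^{\frac{p^\ast(x)}{p^\ast(x)-1}}(\Omega)$ and no better, so H\"older forces the factor carrying $u$ into $L^{p^\ast(x)}$; by the norm identity for constant powers this yields $\|u^{\kappa p^-}\|_{p^\ast(x)}=\|u\|_{\kappa p^- p^\ast(x)}^{\kappa p^-}$, i.e.\ your $\theta$ must dominate $p^\ast(x)$, hence $\theta\ge (p^\ast)^-=\frac{Np^-}{N-p^-}=p^-\cdot\frac{(p^-)^\ast}{p^-}$. Since $p^->1$ this is strictly larger than $\frac{(p^-)^\ast}{p^-}$, so no fixed $\theta<\frac{(p^-)^\ast}{p^-}$ exists: the Lebesgue exponent demanded on the right ($\kappa p^-(p^\ast)^-=\kappa p^-(p^-)^\ast$) exceeds the exponent $\kappa(p^-)^\ast$ gained on the left, and the recurrence $\kappa_{n+1}=\frac{(p^-)^\ast}{p^-\theta}\kappa_n$ has ratio $<1$, so the iteration does not self-improve toward $L^\infty$. (To close it one would need $v^{\beta_1(x)}\in L^{s}$ with $s>\frac{N}{p^-}$, which is a strictly stronger integrability than the stated hypothesis provides at its endpoint.) I note that the paper's own Step 2 silently commits the same conflation, writing $\|((u-1)^+)^{k_1^-p^++1}\|_{p^\ast(x)}$ as $\|(u-1)^+\|_{p^\ast(x)}^{k_1^-p^++1}$ instead of $\|(u-1)^+\|_{(k_1^-p^++1)p^\ast(x)}^{k_1^-p^++1}$; so you have reproduced the paper's strategy faithfully, but the quantitative claim on which your iteration gain rests is not justified as written.
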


\begin{proof}
Let $\phi :%
\mathbb{R}
\longrightarrow \lbrack 0,1]$ be a $C^{1}$ cut-off function such that 
\begin{equation*}
\phi (s)=\left\{ 
\begin{array}{l}
0\text{ if }s\leq 0, \\ 
1\text{ if }s\geq 1%
\end{array}%
\right. \text{ \ and }\phi ^{\prime }(s)\geq 0\text{ in }[0,1].
\end{equation*}%
Given $\delta >0,$ we define $\phi _{\delta }(t)=\phi (\frac{t-1}{\delta })$
for all $t\in 
\mathbb{R}
$. It follows that \ 
\begin{equation}
\begin{array}{l}
\phi _{\delta }\circ z\in W_{0}^{1,p(x)}(\Omega )\text{ \ and \ }\nabla
(\phi _{\delta }\circ z)=(\phi _{\delta }^{\prime }\circ z)\nabla z,\text{ \
for }z\in W_{0}^{1,p(x)}(\Omega )\text{.}%
\end{array}
\label{5}
\end{equation}

Let $(u,v)\in W_{0}^{1,p(x)}(\Omega )\times W_{0}^{1,q(x)}(\Omega )$ be a
weak solution of (\ref{p}). Acting in the first equation in (\ref{3}) with
the test function $\varphi =(\phi _{\delta }\circ u)\varphi $ with $\varphi
\in W_{0}^{1,p(x)}(\Omega )$ and $\varphi \geq 0$ in $\Omega $, we obtain%
\begin{equation*}
\begin{array}{l}
\int_{\Omega }|\nabla u|^{p(x)-2}\nabla u\nabla ((\phi _{\delta }\circ
u)\varphi )\,dx=\int_{\Omega }f(u,v)(\phi _{\delta }\circ u)\varphi \,dx.%
\end{array}%
\end{equation*}%
Hence, by (\ref{5}), we get%
\begin{equation*}
\begin{array}{l}
\int_{\Omega }|\nabla u|^{p(x)}(\phi _{\delta }^{\prime }\circ u)\varphi
\,dx+\int_{\Omega }|\nabla u|^{p(x)-2}\nabla u\nabla \varphi \text{ }(\phi
_{\delta }\circ u)\,dx=\int_{\Omega }f(u,v)(\phi _{\delta }\circ u)\varphi
\,dx.%
\end{array}%
\end{equation*}%
Since $\phi _{\delta }^{\prime }\circ u\geq 0,$ it follows that%
\begin{equation*}
\begin{array}{l}
\int_{\Omega }|\nabla u|^{p(x)-2}\nabla u\nabla \varphi \text{ }(\phi
_{\delta }\circ u)\,dx\leq \int_{\Omega }f(u,v)(\phi _{\delta }\circ
u)\varphi \,dx.%
\end{array}%
\end{equation*}%
Letting $\delta \rightarrow 0$ we achieve 
\begin{equation}
\begin{array}{l}
\int_{\{{u>1}\}}\left\vert \nabla {u}\right\vert ^{p(x)-2}\nabla {u}\nabla {%
\varphi }\text{ }dx\leq \int_{\{{u>1}\}}f({u},v)\varphi \ dx,%
\end{array}
\label{4}
\end{equation}%
for all $\varphi \in W_{0}^{1,p(x)}(\Omega )$ with $\varphi \geq 0$ in $%
\Omega $. Repeating the same argument with the second equation in (\ref{p}),
we get%
\begin{equation}
\begin{array}{l}
\int_{\{{v>1}\}}\left\vert \nabla {v}\right\vert ^{q(x)-2}\nabla {v}\nabla {%
\psi }\text{ }dx\leq \int_{\{{v>1}\}}g(u,v)\varphi \ dx,%
\end{array}
\label{4*}
\end{equation}%
for all $\psi \in W_{0}^{1,q(x)}(\Omega )$ with $\psi \geq 0$ in $\Omega $.

Given $M>0$, define 
\begin{equation*}
\begin{array}{c}
u_{M}\left( x\right) =\min \left\{ u\left( x\right) ,M\right\} ,\text{ \ }%
v_{M}\left( x\right) =\min \left\{ v\left( x\right) ,M\right\} .%
\end{array}%
\end{equation*}%
Observe that $h(s)=s^{k_{1}^{-}p^{+}+1}$ is a $C^{1}$ function, $h(0)=0$ and
there is a constant $L>0$ such that $|h^{\prime }(s)|\leq L$ for all $0\leq
s\leq M$. By proceeding analogously to the proof of \cite[Proposition XI.5,
page 155]{B}, it follows that $u_{M}^{k_{1}^{-}p^{+}+1}\in
W_{0}^{1,p(x)}(\Omega )\cap L^{\infty }(\Omega )$. Similarly we get $v_{M}^{%
\bar{k}_{1}^{-}q^{+}+1}\in W_{0}^{1,q(x)}(\Omega )\cap L^{\infty }(\Omega )$.

Inserting $(\varphi ,\psi )=(u_{M}^{k_{1}^{-}p^{+}+1},v_{M}^{\bar{k}%
_{1}^{-}q^{+}+1})$ in (\ref{4}) and (\ref{4*}), where 
\begin{equation}
\left\{ 
\begin{array}{c}
\left( k_{1}(x)+1\right) p(x)=p^{\ast }(x) \\ 
\left( \bar{k}_{1}(x)+1\right) q(x)=q^{\ast }(x),%
\end{array}%
\right.  \label{40}
\end{equation}%
one has%
\begin{equation}
\begin{array}{l}
\int_{\{{u>1}\}}\left\vert \nabla {u}\right\vert ^{p(x)-2}\nabla {u}\nabla
(u_{M}^{k_{1}^{-}p^{+}+1})\text{ }dx\leq \int_{\{{u>1}\}}f({u}%
,v)u_{M}^{k_{1}^{-}p^{+}+1}\ dx%
\end{array}
\label{8}
\end{equation}%
and%
\begin{equation}
\begin{array}{l}
\int_{\{{v>1}\}}\left\vert \nabla {v}\right\vert ^{q(x)-2}\nabla {v}\nabla
(v_{M}^{\bar{k}_{1}^{-}q^{+}+1})\text{ }dx\leq \int_{\{{v>1}\}}g(u,v)v_{M}^{%
\bar{k}_{1}^{-}q^{+}+1}\ dx,%
\end{array}
\label{8*}
\end{equation}%
\bigskip

\noindent \textbf{Step 1. Estimation of the left-hand side in (\ref{8}) and (%
\ref{8*})\bigskip }

In what follows denote by $(s-1)^{+}:=\max \{s,1\}$ for $s\geq 0$.

First, observe that{%
\begin{equation}
\begin{array}{c}
\left\vert \nabla {u}_{M}\right\vert ^{p(x)}u_{M}{}^{k_{1}^{-}p(x)}=\frac{1}{%
(k_{1}^{-}+1)^{p(x)}}|\nabla (u_{M})^{k_{1}^{-}+1})|^{p(x)}\geq \frac{1}{%
(k_{1}^{-}+1)^{p^{+}}}|\nabla (u_{M})^{k_{1}^{-}+1})|^{p(x)}.%
\end{array}%
\end{equation}%
Then}%
\begin{equation}
\begin{array}{l}
\int_{\{{u>1}\}}\left\vert \nabla {u}\right\vert ^{p(x)-2}\nabla {u}\nabla
(u_{M}^{k_{1}^{-}p^{+}+1})\text{ }dx=\int_{\{{u>1}\}}\left\vert \nabla {u}%
\right\vert ^{p(x)-2}\nabla {u}\nabla (u_{M})^{k_{1}^{-}p^{+}+1}\text{ }dx
\\ 
=(k_{1}^{-}p^{+}+1)\int_{\{{u}_{M}{>1}\}}\left\vert \nabla {u}%
_{M}\right\vert ^{p(x)}u_{M}^{k_{1}^{-}p^{+}}\text{ }dx\geq
(k_{1}^{-}p^{+}+1)\int_{\{{u}_{M}{>1}\}}\left\vert \nabla {u}_{M}\right\vert
^{p(x)}u_{M}^{k_{1}^{-}p(x)}\text{ }dx \\ 
\geq \frac{k_{1}^{-}p^{+}+1}{(k_{1}^{-}+1)^{p^{+}}}\int_{\{{u}_{M}{>1}%
\}}|\nabla (u_{M}^{k_{1}^{-}+1})|^{p(x)}\text{ }dx.%
\end{array}
\label{1}
\end{equation}%
On the other hand{, using (\ref{normro}) and }through the mean value
theorem, there exists $x_{0}\in \Omega $ such that%
\begin{equation}
\begin{array}{l}
1=\int_{\Omega }\left\vert \frac{(u_{M}-1)^{+}}{\Vert (u_{M}-1)^{+}\Vert
_{(k_{1}^{-}+1)p^{\ast }(x)}}\right\vert ^{(k_{1}^{-}+1)p^{\ast }(x)}dx \\ 
=\int_{\Omega }\left\vert \frac{((u_{M}-1)^{+})^{k_{1}^{-}+1}}{\Vert
((u_{M}-1)^{+})^{k_{1}^{-}+1}\Vert _{p^{\ast }(x)}}\right\vert ^{p^{\ast
}(x)}\times \left( \frac{\Vert ((u_{M}-1)^{+})^{k_{1}^{-}+1}\Vert _{p^{\ast
}(x)}}{\Vert (u_{M}-1)^{+}\Vert _{(k_{1}^{-}+1)p^{\ast }(x)}^{k_{1}^{-}+1}}%
\right) ^{p^{\ast }(x)}dx \\ 
=\left( \frac{\Vert ((u_{M}-1)^{+})^{k_{1}^{-}+1}\Vert _{p^{\ast }(x)}}{%
\Vert (u_{M}-1)^{+}\Vert _{(k_{1}^{-}+1)p^{\ast }(x)}^{k_{1}^{-}+1}}\right)
^{p^{\ast }(x_{0})},%
\end{array}%
\end{equation}%
which implies%
\begin{equation}
\Vert ((u_{M}-1)^{+})^{k_{1}^{-}+1}\Vert _{p^{\ast }(x)}=\Vert
(u_{M}-1)^{+}\Vert _{(k_{1}^{-}+1)p^{\ast }(x)}^{k_{1}^{-}+1}.  \label{9}
\end{equation}%
{Furthermore, from (\ref{normro}) one has}%
\begin{equation*}
\begin{array}{l}
\int_{\Omega }|\frac{\nabla ((u_{M}-1)^{+})^{k_{1}^{-}+1}}{\Vert
((u_{M}-1)^{+})^{k_{1}^{-}+1}\Vert _{1,p(x)}}|^{p(x)}\text{ }dx=1.%
\end{array}%
\end{equation*}%
Using the mean value theorem, there exists $x_{M}\in \Omega $ such that%
\begin{equation}
\begin{array}{l}
\int_{\Omega }|\nabla ((u_{M}-1)^{+})^{k_{1}^{-}+1}|^{p(x)}\text{ }dx=\Vert
((u_{M}-1)^{+})^{k_{1}^{-}+1}\Vert _{1,p(x)}^{p(x_{M})}.%
\end{array}
\label{6}
\end{equation}%
Then, (\ref{2}), (\ref{9}), (\ref{6}) and through the Sobolev embedding (\ref%
{20}), one gets%
\begin{equation}
\begin{array}{l}
\frac{k_{1}^{-}p^{+}+1}{(k_{1}^{-}+1)^{p^{+}}}\int_{\{{u}_{M}{>1}\}}|\nabla
(u_{M}^{k_{1}^{-}+1})|^{p(x)}\text{ }dx=\frac{k_{1}^{-}p^{+}+1}{%
(k_{1}^{-}+1)^{p^{+}}}\int_{\Omega }|\nabla
((u_{M}-1)^{+})^{k_{1}^{-}+1}|^{p(x)}\text{ }dx \\ 
=\frac{k_{1}^{-}p^{+}+1}{(k_{1}^{-}+1)^{p^{+}}}\Vert
((u_{M}-1)^{+})^{k_{1}^{-}+1}\Vert _{1,p(x)}^{p(x_{M})}\geq \hat{C}_{1}\frac{%
k_{1}^{-}p^{+}+1}{(k_{1}^{-}+1)^{p^{+}}}\Vert
((u_{M}-1)^{+})^{k_{1}^{-}+1}\Vert _{p^{\ast }(x)}^{p(x_{M})} \\ 
=\hat{C}_{1}\frac{k_{1}^{-}p^{+}+1}{(k_{1}^{-}+1)^{p^{+}}}\Vert
(u_{M}-1)^{+}\Vert _{(k_{1}^{-}+1)p^{\ast }(x)}^{(k_{1}^{-}+1)p(x_{M})}\geq
C_{1}\frac{k_{1}^{-}p^{+}+1}{(k_{1}^{-}+1)^{p^{+}}}\Vert (u_{M}-1)^{+}\Vert
_{(k_{1}^{-}+1)p^{\ast }(x)}^{(k_{1}^{-}+1)p^{\pm }},%
\end{array}
\label{12}
\end{equation}%
where $C_{1}=C_{1}(p,N,\Omega )$ is a positive constant and 
\begin{equation}
p^{\pm }=\left\{ 
\begin{array}{ll}
p^{+} & \text{if }\Vert (u_{M}-1)^{+}\Vert _{(k_{1}^{-}+1)p^{\ast }(x)}>1 \\ 
p^{-} & \text{if }\Vert (u_{M}-1)^{+}\Vert _{(k_{1}^{-}+1)p^{\ast }(x)}\leq
1.%
\end{array}%
\right.
\end{equation}%
Similarly, following the same argument as above leads to 
\begin{equation}
\begin{array}{l}
\frac{\bar{k}_{1}^{-}q^{+}+1}{(\bar{k}_{1}^{-}+1)^{q^{+}}}\int_{\{{v}_{M}{>1}%
\}}|\nabla (v_{M}^{\bar{k}_{1}^{-}+1})|^{q(x)}\text{ }dx\geq C_{2}\frac{\bar{%
k}_{1}^{-}q^{+}+1}{(\bar{k}_{1}^{-}+1)^{q^{+}}}\Vert (v_{M}-1)^{+}\Vert _{(%
\bar{k}_{1}^{-}+1)q^{\ast }(x)}^{(\bar{k}_{1}^{-}+1)q^{\pm }},%
\end{array}
\label{12*}
\end{equation}%
with positive constants $C_{2}=C_{2}(q,N,\Omega )$ and%
\begin{equation}
q^{\pm }=\left\{ 
\begin{array}{ll}
q^{+} & \text{if }\Vert (v_{M}-1)^{+}\Vert _{(\bar{k}_{1}^{-}+1)q^{\ast
}(x)}>1 \\ 
q^{-} & \text{if }\Vert (v_{M}-1)^{+}\Vert _{(\bar{k}_{1}^{-}+1)q^{\ast
}(x)}\leq 1.%
\end{array}%
\right.
\end{equation}%
\bigskip

\noindent \textbf{Step 2. Estimation of the right-hand side in (\ref{8}) and
(\ref{8*}).\bigskip }

Using (\ref{4}), \textrm{(H.}$f$\textrm{)}, (\ref{40}), (\ref{h3}), (\ref{20}%
) together with H\"{o}lder's inequality and \cite[Proposition 2.3]{Benou},
we get 
\begin{equation}
\begin{array}{l}
\int_{\{{u>1}\}}f(u,v)u_{M}^{k_{1}^{-}p^{+}+1}\ dx\leq \int_{\{{u>1}%
\}}f(u,v)u^{k_{1}^{-}p^{+}+1}\ dx \\ 
\leq 2m_{1}\int_{\{{u>1}\}}(1+v^{\beta _{1}(x)})u^{k_{1}^{-}p^{+}+1}dx \\ 
=2m_{1}\int_{\Omega }((u-1)^{+})^{k_{1}^{-}p^{+}+1}\text{ }%
dx+2m_{1}\int_{\Omega }v^{\beta _{1}(x)}((u-1)^{+})^{k_{1}^{-}p^{+}+1}\text{ 
}dx \\ 
\leq \hat{C}_{2}\left( \left\Vert (u-1)^{+}\right\Vert _{p^{\ast
}(x)}^{k_{1}^{-}p^{+}+1}+\left\Vert (u-1)^{+}\right\Vert _{p^{\ast
}(x)}^{k_{1}^{-}p^{+}+1}\left\Vert v^{\beta _{1}(x)}\right\Vert _{\frac{%
p^{\ast }(x)}{p^{\ast }(x)-1}}\right) \\ 
\leq \hat{C}_{2}^{\prime }\left( \left\Vert (u-1)^{+}\right\Vert _{p^{\ast
}(x)}^{k_{1}^{-}p^{+}+1}+\left\Vert (u-1)^{+}\right\Vert _{p^{\ast
}(x)}^{k_{1}^{-}p^{+}+1}\left\Vert v\right\Vert _{\frac{\beta _{1}(x)p^{\ast
}(x)}{p^{\ast }(x)-1}}^{\beta _{1}^{\pm }}\right) \\ 
\leq C_{2}\left\Vert (u-1)^{+}\right\Vert _{p^{\ast
}(x)}^{k_{1}^{-}p^{+}+1}(1+\left\Vert v\right\Vert _{q^{\ast }(x)}^{\beta
_{1}^{\pm }}),%
\end{array}
\label{13}
\end{equation}%
with a positive constant $C_{2}=C_{2}(m_{1},\beta _{1},N,\Omega ,p,q)$ and 
\begin{equation}
\beta _{1}^{\pm }=\left\{ 
\begin{array}{ll}
\beta _{1}^{+} & \text{if }\left\Vert v\right\Vert _{q^{\ast }(x)}>1 \\ 
\beta _{1}^{-} & \text{if }\left\Vert v\right\Vert _{q^{\ast }(x)}\leq 1.%
\end{array}%
\right.  \label{14}
\end{equation}%
Similarly, by (\ref{4*}), \textrm{(H.}$g$\textrm{)}, (\ref{40}), (\ref{h3}),
(\ref{20}), combined with H\"{o}lder's inequality and \cite[Proposition 2.3]%
{Benou}, one has 
\begin{equation}
\begin{array}{c}
\int_{\{{v>1}\}}g(u,v)v_{M}^{\bar{k}_{1}^{-}q^{+}+1}\ dx\leq C_{2}^{\prime
}(1+\left\Vert u\right\Vert _{p^{\ast }(x)}^{\alpha _{2}^{i}})\left\Vert
v\right\Vert _{q^{\ast }(x)}^{\bar{k}_{1}^{-}q^{+}+1},%
\end{array}
\label{13*}
\end{equation}%
where the positive constant $C_{2}^{\prime }=C_{2}^{\prime }(m_{2},\alpha
_{2},N,\Omega ,p,q)$ and%
\begin{equation}
\alpha _{2}^{i}=\left\{ 
\begin{array}{ll}
\alpha _{2}^{+} & \text{if }\left\Vert u\right\Vert _{p^{\ast }(x)}>1 \\ 
\alpha _{2}^{-} & \text{if }\left\Vert u\right\Vert _{p^{\ast }(x)}\leq 1.%
\end{array}%
\right.  \label{14*}
\end{equation}%
\bigskip

\noindent \textbf{Step 3. Moser iteration procedure and passage to the
limit.\bigskip }

We note that if $\Vert (u-1)^{+}\Vert _{p^{\ast }(x)},\left\Vert
(v-1)^{+}\right\Vert _{q^{\ast }(x)}>1$, then there hold 
\begin{equation}
\begin{array}{c}
\left\Vert (u-1)^{+}\right\Vert _{p^{\ast }(x)}^{k_{1}^{-}p^{+}+1}\leq \Vert
(u-1)^{+}\Vert _{p^{\ast }(x)}^{(k_{1}^{-}+1)p^{+}}\ \ \text{and}\ \
\left\Vert (v-1)^{+}\right\Vert _{q^{\ast }(x)}^{\bar{k}_{1}^{-}q^{+}+1}\leq
\left\Vert (v-1)^{+}\right\Vert _{q^{\ast }(x)}^{(\bar{k}_{1}^{-}+1)q^{+}}%
\end{array}
\label{15}
\end{equation}%
because $p^{+},q^{+}>1$. Then, it follows from (\ref{12}) - (\ref{15}) that%
\begin{equation}
\begin{array}{l}
\Vert (u_{M}-1)^{+}\Vert _{(k_{1}^{-}+1)p^{\ast }(x)}\leq C^{\frac{1}{%
k_{1}^{-}+1}}\left( \frac{k_{1}^{-}+1}{(k_{1}^{-}p^{+}+1)^{\frac{1}{p^{+}}}}%
\right) ^{\frac{p^{+}}{(k_{1}^{-}+1)p^{\pm }}}\left\Vert
(u-1)^{+}\right\Vert _{p^{\ast }(x)}^{p^{+}/p^{-}}\left( 1+\left\Vert
v\right\Vert _{q^{\ast }(x)}^{\beta _{1}^{\pm }}\right) ^{\frac{1}{%
(p^{-})^{\ast }}}%
\end{array}
\label{25}
\end{equation}%
and%
\begin{equation}
\begin{array}{l}
\Vert (v_{M}-1)^{+}\Vert _{(\bar{k}_{1}^{-}+1)q^{\ast }(x)}\leq C^{\frac{1}{%
\bar{k}_{1}^{-}+1}}\left( \frac{\bar{k}_{1}^{-}+1}{(\bar{k}_{1}^{-}q^{+}+1)^{%
\frac{1}{q^{+}}}}\right) ^{\frac{q^{+}}{(\bar{k}_{1}^{-}+1)q^{\pm }}%
}\left\Vert (v-1)^{+}\right\Vert _{q^{\ast }(x)}^{q^{+}/q^{-}}\left(
1+\left\Vert u\right\Vert _{p^{\ast }(x)}^{\alpha _{2}^{\pm }}\right) ^{%
\frac{1}{(q^{-})^{\ast }}}%
\end{array}
\label{25*}
\end{equation}%
with a constant $C=C(m_{1},\alpha _{2},\beta _{1},N,\Omega ,p,q)>0$.

Inductively, we construct the sequences $\{k_{n}\}_{n\geq 1}$ and $\{%
\overline{k}_{n}\}_{n\geq 1}$ by defining 
\begin{equation}
\left\{ 
\begin{array}{l}
k_{n}(x)+1=(k_{n-1}(x)+1)\frac{p^{\ast }(x)}{p(x)}=\left( \frac{p^{\ast }(x)%
}{p(x)}\right) ^{n}, \\ 
\overline{k}_{n}(x)+1=(\overline{k}_{n-1}(x)+1)\frac{q^{\ast }(x)}{q(x)}%
=\left( \frac{q^{\ast }(x)}{q(x)}\right) ^{n},%
\end{array}%
\right.  \label{10}
\end{equation}%
for all $n\geq 2$ starting with (\ref{40}). If we have for infinitely many $%
n $ that 
\begin{equation*}
\begin{array}{c}
\Vert (u-1)^{+}\Vert _{(k_{n}^{-}+1)p^{\ast }(x)}\leq 1\ \ \text{and }\ \
\Vert (v-1)^{+}\Vert _{(\overline{k}_{n}^{-}+1)q^{\ast }(x)}\leq 1,%
\end{array}%
\end{equation*}%
then letting $n\rightarrow \infty $ we get $\Vert (u\Vert _{\infty }\leq 1$
and $\Vert v\Vert _{\infty }\leq 1$, and we are done. If not, it suffices to
consider the case 
\begin{equation*}
\Vert (u-1)^{+}\Vert _{(k_{n}^{-}+1)p(x)}>1\ \ \text{and }\ \ \Vert
(v-1)^{+}\Vert _{(\overline{k}_{n}^{-}+1)q(x)}>1
\end{equation*}%
for all $n$ because otherwise the proof reduces to special case of Moser
iteration procedure for an elliptic equation. In this case, we argue as for
obtaining (\ref{25}) and (\ref{25*}). Namely, proceeding by induction
through (\ref{10}) and then letting $M\rightarrow \infty $ we arrive at 
\begin{equation}
\begin{array}{l}
\Vert (u-1)^{+}\Vert _{(k_{n}^{-}+1)p^{\ast }(x)} \\ 
\leq C_{1}^{\frac{1}{k_{n}^{-}+1}}\left( \frac{k_{n}^{-}+1}{%
(k_{n}^{-}p^{+}+1)^{\frac{1}{p^{+}}}}\right) ^{\frac{p^{+}}{%
(k_{n}^{-}+1)p^{\pm }}}\Vert (u-1)^{+}\Vert _{(k_{n-1}^{-}+1)p^{\ast
}(x)}^{p^{+}/p^{-}}(1+\Vert v\Vert _{q^{\ast }(x)}^{\beta _{1}^{\pm }})^{%
\frac{1}{(k_{n-1}^{-}+1)(p^{-})^{\ast }}}%
\end{array}
\label{30}
\end{equation}%
and%
\begin{equation}
\begin{array}{l}
\Vert (v-1)^{+}\Vert _{(\overline{k}_{n}^{-}+1)q^{\ast }(x)} \\ 
\leq C_{2}^{\frac{1}{\overline{k}_{n}^{-}+1}}\left( \frac{\overline{k}%
_{n}^{-}+1}{(\overline{k}_{n}^{-}q^{+}+1)^{\frac{1}{q^{+}}}}\right) ^{\frac{%
q^{+}}{(\overline{k}_{n}^{-}+1)q^{\pm }}}\Vert (v-1)^{+}\Vert _{(\overline{k}%
_{n}^{-}+1)q^{\ast }(x)}^{q^{+}/q^{-}}(1+\Vert u\Vert _{p^{\ast
}(x)}^{\alpha _{2}^{\pm }})^{\frac{1}{(\overline{k}_{n-1}^{-}+1)(q^{-})^{%
\ast }}},%
\end{array}
\label{31}
\end{equation}%
with positive constants $C_{1}=C_{1}(N,\Omega ,m_{1},p,\beta _{1})$ and $%
C_{2}=C_{2}(N,\Omega ,m_{2},q,\alpha _{2})$. It turns out from (\ref{30})
that 
\begin{equation*}
\begin{array}{l}
\Vert (u-1)^{+}\Vert _{(k_{n}^{-}+1)p^{\ast }(x)} \\ 
\\ 
\leq C_{1}^{\sum\limits_{i=1}^{n}\frac{1}{k_{i}^{-}+1}}\left(
\prod\limits_{i=1}^{n}\left[ \left( \frac{k_{i}^{-}+1}{\left(
k_{i}^{-}p^{+}+1\right) ^{\frac{1}{p^{+}}}}\right) ^{\frac{1}{\sqrt{%
k_{i}^{-}+1}}}\right] ^{\frac{1}{\sqrt{k_{i}^{-}+1}}}\right) ^{p^{+}/p^{\pm
}}\Vert (u-1)^{+}\Vert _{p^{\ast }(x)}^{p^{+}/p^{-}}(1+\Vert v\Vert
_{q^{\ast }(x)}^{\beta _{1}^{\pm }})^{\frac{1}{(p^{-})^{\ast }}\left(
1+\sum\limits_{i=1}^{n-1}\frac{1}{k_{i}^{-}+1}\right) }.%
\end{array}%
\end{equation*}%
Furthermore, since $\lim_{z\rightarrow \infty }\left( \frac{z+1}{\left(
zp^{+}+1\right) ^{\frac{1}{p^{+}}}}\right) ^{\frac{1}{\sqrt{z+1}}}=1$, there
is a positive constant $C_{0}$ for which one has%
\begin{equation}
\begin{array}{c}
\Vert (u-1)^{+}\Vert _{(k_{n}+1)p^{\ast }}\leq C_{1}^{\sum\limits_{i=1}^{n}%
\frac{1}{k_{i}^{-}+1}}C_{0}^{\frac{p^{+}}{p^{\pm }}\sum\limits_{i=1}^{n}%
\frac{1}{\sqrt{k_{i}^{-}+1}}}\Vert (u-1)^{+}\Vert _{p^{\ast
}(x)}^{p^{+}/p^{-}}(1+\Vert v\Vert _{q^{\ast }(x)}^{\beta _{1}^{\pm }})^{%
\frac{1}{(p^{\ast })^{-}}\left( 1+\sum\limits_{i=1}^{n-1}\frac{1}{k_{i}^{-}+1%
}\right) }.%
\end{array}
\label{11}
\end{equation}%
Similarly, we obtain 
\begin{equation}
\begin{array}{c}
\Vert (v-1)^{+}\Vert _{(\overline{k}_{n}^{-}+1)q^{\ast }(x)}\leq
C_{2}^{\sum\limits_{i=1}^{n}\frac{1}{\overline{k}_{i}^{-}+1}}C_{0}^{\frac{%
q^{+}}{q^{\pm }}\sum\limits_{i=1}^{n}\frac{1}{\sqrt{\overline{k}_{i}^{-}+1}}%
}\Vert (v-1)^{+}\Vert _{q^{\ast }(x)}^{q^{+}/q^{-}}(1+\Vert u\Vert _{p^{\ast
}(x)}^{\alpha _{2}^{\pm }})^{\frac{1}{(q^{-})^{\ast }}\left(
1+\sum\limits_{i=1}^{n-1}\frac{1}{\bar{k}_{i}^{-}+1}\right) }.%
\end{array}
\label{11*}
\end{equation}%
Moreover, (\ref{10}) guarantees the convergence of the series in (\ref{11})
and (\ref{11*}), for instance%
\begin{equation*}
\begin{array}{l}
1+\sum\limits_{i=1}^{n-1}\frac{1}{\overline{k}_{i}^{-}+1}=\sum%
\limits_{i=0}^{n-1}\left( \frac{p^{-}}{(p^{-})^{\ast }}\right)
^{i}\longrightarrow \frac{(p^{-})^{\ast }}{(p^{-})^{\ast }-p^{-}}.%
\end{array}%
\end{equation*}%
Letting $n\rightarrow \infty $ in (\ref{11}) and (\ref{11*}) we derive the
estimates (\ref{E1}) and (\ref{E2}). This completes the proof.
\end{proof}

Next result is consequence of Theorem \ref{T2}.

\begin{proposition}
\label{P3}Under the assumptions of Theorem \ref{T1}, every solutions $(u,v)$
of (\ref{p}) is bounded in $C^{1,\gamma }(\overline{\Omega })\times
C^{1,\gamma }(\overline{\Omega })$ and there is a constant $R>0$ such that%
\begin{equation*}
\left\Vert u\right\Vert _{C^{1,\gamma }(\overline{\Omega })},\left\Vert
v\right\Vert _{C^{1,\gamma }(\overline{\Omega })}<R.
\end{equation*}%
Moreover, it holds%
\begin{equation}
u(x),v(x)\geq c_{0}d(x),  \label{63}
\end{equation}%
with some constant $c_{0}>0$.
\end{proposition}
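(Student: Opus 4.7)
The plan is to derive the $C^{1,\gamma}$ bound by bootstrapping from the $L^{\infty}$ control provided by Theorem \ref{T2}, combined with a Hopf-type boundary estimate from below and standard regularity for the $p(x)$-Laplacian with a right-hand side in $L^{r}$, $r>N$.

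First I would verify that the assumptions of Theorem \ref{T1} imply hypothesis (\ref{h3}) of Theorem \ref{T2}: (\ref{h1}) gives $\beta_{1}(x),\alpha_{2}(x)>0$, (\ref{c1}) yields $\alpha_{1}^{+},\beta_{2}^{+}<0$, and (\ref{c1*}) supplies the required upper growth bounds. Hence the estimates (\ref{E1})--(\ref{E2}) hold for every weak solution $(u,v)$ of (\ref{p}). To upgrade these to an absolute $L^{\infty}$ bound $\|u\|_{\infty},\|v\|_{\infty}\le M$ independent of the individual solution, I would test each equation with the solution itself and use \textrm{(H.}$f$\textrm{)}, \textrm{(H.}$g$\textrm{)}, the embedding (\ref{20}), and H\"older's inequality to derive a coupled inequality for $\|u\|_{p^{*}(x)}$ and $\|v\|_{q^{*}(x)}$. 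The growth restrictions (\ref{c1*}) are exactly what is needed to close this system and extract uniform Sobolev bounds, which inserted into (\ref{E1})--(\ref{E2}) give the desired uniform $L^{\infty}$ bound.

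For the lower bound (\ref{63}), I would use \textrm{H(}$f,g$\textrm{)}$_{1}$, which forces $f(u,v),g(u,v)\ge\sigma>0$. Let $\underline{u}$ be the unique positive solution of $-\Delta_{p(x)}\underline{u}=\sigma$ in $\Omega$ with $\underline{u}=0$ on $\partial\Omega$. Known regularity together with the Hopf boundary point lemma for the $p(x)$-Laplacian, valid under the log-H\"older condition (\ref{16}), yield $\underline{u}\in C^{1,\gamma}(\overline{\Omega})$ with $\partial_{\nu}\underline{u}<0$ on $\partial\Omega$, so $\underline{u}(x)\ge c_{0}d(x)$. The weak comparison principle for the $p(x)$-Laplacian then gives $u\ge\underline{u}\ge c_{0}d(x)$; an identical argument applied to the second equation yields $v\ge c_{0}d(x)$ after shrinking $c_{0}$ if necessary.

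Finally, I would combine the two ends to control the right-hand sides. From \textrm{(H.}$f$\textrm{)}, the lower bound $u\ge c_{0}d(x)$, the upper bound $\|v\|_{\infty}\le M$, and $\alpha_{1}(x)<0<\beta_{1}(x)$, one obtains
\[
0<f(u,v)\le C\bigl(1+d(x)^{\alpha_{1}^{-}}\bigr),
\]
and the bound $\alpha_{1}^{-}>-1/N$ from (\ref{c1}) yields $d(\cdot)^{\alpha_{1}^{-}}\in L^{r}(\Omega)$ for some $r>N$ (since $|\alpha_{1}^{-}|r<1$ is compatible with $r>N$). Thus $f(u,v)$ is uniformly bounded in $L^{r}(\Omega)$; an analogous estimate, using $\beta_{2}^{-}>-1/N$, bounds $g(u,v)$ in $L^{r'}(\Omega)$ for some $r'>N$. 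Fan-type $C^{1,\gamma}$ regularity for $p(x)$-Laplacian equations with right-hand side in $L^{r}$, $r>N$, then furnishes the uniform bound $\|u\|_{C^{1,\gamma}(\overline{\Omega})},\|v\|_{C^{1,\gamma}(\overline{\Omega})}<R$. The chief obstacle is the decoupling step: the cross exponents $\beta_{1}$ and $\alpha_{2}$ link the Sobolev bounds on $u$ and $v$, and it is precisely the growth restrictions (\ref{c1*}) that make it possible to separate them and close the estimates into a uniform bound.
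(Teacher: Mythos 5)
Your overall skeleton coincides with the paper's. The lower bound (\ref{63}) is obtained exactly as in the paper: compare $u$ and $v$ with the positive solutions of $-\Delta_{p(x)}z_{1}=\sigma$ and $-\Delta_{q(x)}z_{2}=\sigma$, using \textrm{H(}$f,g$\textrm{)}$_{1}$ and the weak comparison principle (the paper quotes \cite{AM} for $z_{i}\geq c\,d(x)$ rather than re-running a Hopf lemma, but that is the same ingredient). The $C^{1,\gamma}$ bound is also obtained as in the paper: the $L^{\infty}$ control of Theorem \ref{T2} together with $u,v\geq c_{0}d(x)$ and (\ref{c1}) gives $f(u,v)\leq C\,d(x)^{\alpha_{1}^{-}}$ and $g(u,v)\leq C'\,d(x)^{\beta_{2}^{-}}$, after which one invokes regularity theory (the paper uses \cite[Lemma 2]{AM}; your remark that $\alpha_{1}^{-}>-1/N$ places $d(\cdot)^{\alpha_{1}^{-}}$ in $L^{r}(\Omega)$ for some $r>N$ is the same mechanism).

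The one place you depart from the paper is the proposed ``decoupling'' step, and that is where there is a gap. Testing each equation with the solution and applying H\"older under (\ref{c1*}) yields, for large norms, inequalities of the shape $\Vert u\Vert_{p^{\ast}(x)}^{p^{-}-1}\leq C\bigl(1+\Vert v\Vert_{q^{\ast}(x)}^{\beta_{1}^{+}}\bigr)$ and $\Vert v\Vert_{q^{\ast}(x)}^{q^{-}-1}\leq C\bigl(1+\Vert u\Vert_{p^{\ast}(x)}^{\alpha_{2}^{+}}\bigr)$. Substituting one into the other produces a uniform bound only if $\alpha_{2}^{+}\beta_{1}^{+}<(p^{-}-1)(q^{-}-1)$. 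Condition (\ref{c1*}) guarantees only the pointwise bound $\alpha_{2}(x)\beta_{1}(x)\leq(p^{\ast}(x)-1)(q^{\ast}(x)-1)$, which is what makes the H\"older exponents match; since $p^{\ast}>p$ and $q^{\ast}>q$, it does not imply the subhomogeneity needed to close the coupled system. So the assertion that (\ref{c1*}) is ``exactly what is needed'' to extract uniform Sobolev bounds is unjustified. Note that the paper does not attempt this step at all: it simply invokes the conclusion of Theorem \ref{T2}, whose constants depend on $\Vert u\Vert_{p^{\ast}(x)}$ and $\Vert v\Vert_{q^{\ast}(x)}$ of the particular solution. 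If the proposition is read as a per-solution bound, your remaining argument is complete and matches the paper; if $R$ is meant to be uniform over all solutions, the missing ingredient is precisely the one you assert without proof.
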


\begin{proof}
We first show (\ref{63}). Recalling the constant $\sigma >0$ in \textrm{H(}$%
f,g$\textrm{)}$_{1}$, let $z_{1}$ and $z_{2}$ the only positive solutions of%
\begin{equation}
\left\{ 
\begin{array}{l}
-\Delta _{p(x)}z_{1}=\sigma \text{ in }\Omega \\ 
z_{1}=0\text{ on }\partial \Omega%
\end{array}%
\right. \ \text{and}\ \left\{ 
\begin{array}{l}
-\Delta _{q(x)}z_{2}=\sigma \text{ in }\Omega \\ 
z_{2}=0\text{ on }\partial \Omega ,%
\end{array}%
\right.  \label{64}
\end{equation}%
which are known to satisfy 
\begin{equation}
z_{1}(x)\geq c_{2}d(x)\text{ \ and \ }z_{2}(x)\geq c_{2}^{\prime }d(x)\text{
\ in }\Omega ,  \label{65}
\end{equation}%
for certain positive constants $c_{2}$ and $c_{2}^{\prime }$ (see, e.g., 
\cite{AM}). Then, from (\ref{p}), (\ref{64}) and \textrm{H(}$f,g$\textrm{)}$%
_{1}$, it follows that%
\begin{equation*}
\left\{ 
\begin{array}{l}
-\Delta _{p(x)}u\geq -\Delta _{p(x)}z_{1}\text{ in }\Omega \\ 
u=z_{1}\text{ on }\partial \Omega%
\end{array}%
\right. \text{ \ and \ }\left\{ 
\begin{array}{l}
-\Delta _{q(x)}v\geq -\Delta _{q(x)}z_{2}\text{ in }\Omega \\ 
v=z_{2}\text{ on }\partial \Omega .%
\end{array}%
\right.
\end{equation*}%
Therefore, the weak comparison principle leads to (\ref{63}).

By virtue of\textrm{\ (H.}$f$\textrm{)}, \textrm{(H.}$g$\textrm{)}, (\ref{63}%
), (\ref{h1}), (\ref{h2}) and (\ref{c1}), on account of Theorem \ref{T2},
one has%
\begin{equation}
\begin{array}{c}
f(u,v)\leq C_{0}d(x)^{\alpha _{1}^{-}}\text{ \ and \ }f(u,v)\leq
C_{0}^{\prime }d(x)^{\beta _{2}^{-}}\text{ \ in }\Omega ,%
\end{array}%
\end{equation}%
for some positive constants $C_{0}$ and $C_{0}^{\prime }$. Then, the $%
C^{1,\alpha }$-boundedness of $u$ and $v$ follows from \cite[Lemma 2]{AM}.
The proof is completed.
\end{proof}

\section{Comparison properties}

\label{S3}

In this section, we assume that (\ref{c2}) and (\ref{c2*}) hold. For a fixed 
$\delta >0$ small, define $\overline{u}$ and $\overline{v}$ in $C^{1,\gamma
}(\overline{\Omega }),$ for certain $\gamma \in (0,1)$, as the unique weak
solutions of the problems%
\begin{equation}
-\Delta _{p(x)}\overline{u}=\lambda \left\{ 
\begin{array}{ll}
1 & \text{ in \ }\Omega \backslash \overline{\Omega }_{\delta } \\ 
\overline{u}^{-\alpha _{1}(x)} & \text{\ in \ }\Omega _{\delta }%
\end{array}%
\right. ,\text{ }\overline{u}>0\text{ in }\Omega ,\text{ }\overline{u}=0%
\text{ \ on }\partial \Omega  \label{80}
\end{equation}%
\begin{equation}
-\Delta _{q(x)}\overline{v}=\lambda \left\{ 
\begin{array}{ll}
1 & \text{ in \ }\Omega \backslash \overline{\Omega }_{\delta } \\ 
\overline{v}^{-\beta _{2}(x)} & \text{\ in \ }\Omega _{\delta }%
\end{array}%
\right. ,\text{ }\overline{v}>0\text{ in }\Omega ,\text{ }\overline{v}=0%
\text{ \ on }\partial \Omega .  \label{80*}
\end{equation}%
where $\lambda >1$ is a constant and 
\begin{equation*}
\Omega _{\delta }=\left\{ x\in \Omega :d\left( x,\partial \Omega \right)
<\delta \right\} .
\end{equation*}%
Combining the results in \cite[Lemmas 1 and 3]{AM} and \cite{Fan}, it is
readily seen that for $\lambda >1$ large $\overline{u}$ and $\overline{v}$
verify 
\begin{equation}
\begin{array}{l}
\min \{\delta ,d(x)\}\leq \overline{u}(x)\leq c_{1}\lambda ^{\frac{1}{p^{-}-1%
}}\text{ \ in }\Omega ,%
\end{array}%
\end{equation}%
and%
\begin{equation}
\begin{array}{l}
\min \{\delta ,d(x)\}\leq \overline{v}(x)\leq c_{2}\lambda ^{\frac{1}{q^{-}-1%
}}\text{ \ in }\Omega ,%
\end{array}
\label{2}
\end{equation}%
for some positive constant $c_{1},$ $c_{2}$ independent of $\lambda $ and
for $\delta >0$ small. Moreover, similar arguments explored in the proof of 
\cite[Theorem 4.4]{QZ} produce constants $c_{0},c_{0}^{\prime }>0$ such that%
\begin{equation}
\begin{array}{l}
\overline{u}(x)\leq c_{0}d(x)^{\theta _{1}}\text{ \ and \ }\overline{v}%
(x)\leq c_{0}^{\prime }d(x)^{\theta _{2}}\text{ in }\Omega _{\delta },%
\end{array}
\label{82}
\end{equation}%
for some constants $\theta _{1},\theta _{2}\in (0,1),$ which assumed to
satisfy the estimates%
\begin{equation}
\begin{array}{l}
\theta _{1}\geq \frac{-\beta _{1}^{-}}{\alpha _{1}^{-}}\text{ \ and \ }%
\theta _{2}\geq \frac{-\alpha _{2}^{-}}{\beta _{2}^{-}}.%
\end{array}%
\end{equation}%
Notice that $\theta _{1}$ and $\theta _{2}$ exist since $-\beta
_{1}^{-}<\alpha _{1}^{-}$ and $-\alpha _{2}^{-}<\beta _{2}^{-}$ (see \textrm{%
(H.}$f$\textrm{)} and \textrm{(H.}$g$\textrm{)}).

Now, let consider the functions $\underline{u}$ and $\underline{v}$ defined
by 
\begin{equation}
-\Delta _{p(x)}\underline{u}=\lambda ^{-1}\left\{ 
\begin{array}{ll}
1 & \text{ in \ }\Omega \backslash \overline{\Omega }_{\delta } \\ 
-1 & \text{\ in \ }\Omega _{\delta }%
\end{array}%
\right. ,\text{ }\underline{u}=0\text{ \ on }\partial \Omega  \label{83}
\end{equation}%
and 
\begin{equation}
-\Delta _{q(x)}\underline{v}=\lambda ^{-1}\left\{ 
\begin{array}{ll}
1 & \text{ in \ }\Omega \backslash \overline{\Omega }_{\delta } \\ 
-1 & \text{\ in \ }\Omega _{\delta }%
\end{array}%
\right. ,\text{ }\underline{v}=0\text{ \ on }\partial \Omega .  \label{83*}
\end{equation}%
where $\Omega _{\delta }$ is given by%
\begin{equation}
\Omega _{\delta }=\left\{ x\in \Omega :d\left( x,\partial \Omega \right)
<\delta \right\} ,  \label{70}
\end{equation}%
with a fixed $\delta >0$ sufficiently small. Combining \cite[Lemma 2.1]{Fan}
and \cite[Theorem 1.1]{FZZ} with \cite[Lemma 3]{AM}, we get 
\begin{equation}
\begin{array}{l}
c_{3}d(x)\leq \underline{u}(x)\leq c_{4}\lambda ^{\frac{-1}{p^{+}-1}}\text{
\ and \ }c_{3}^{\prime }d(x)\leq \underline{v}(x)\leq c_{4}^{\prime }\lambda
^{\frac{-1}{q^{+}-1}}\text{ \ in }\Omega ,%
\end{array}
\label{84}
\end{equation}%
where $c_{3},c_{4},c_{3}^{\prime }$ and $c_{4}^{\prime }$ are positive
constants. Obviously, from (\ref{80}), (\ref{80*}), (\ref{83}) and (\ref{83*}%
), we have $(\underline{u},\underline{v})\leq (\overline{u},\overline{v})$
in $\overline{\Omega }$ for $\lambda >0$ large.

The following result allows us to achieve useful comparison properties.

\begin{proposition}
\label{P1}Assume that \textrm{(H.}$f$\textrm{)}, \textrm{(H.}$g$\textrm{)}
and \textrm{H(}$f,g$\textrm{)}$_{2}$ hold. Then, for $\lambda >0$ large
enough, we have%
\begin{equation}
-\Delta _{p(x)}\underline{u}\leq f(\underline{u},\overline{v})\text{, \ \ }%
-\Delta _{q(x)}\underline{v}\leq g(\overline{u},\underline{v})\text{ \ in }%
\Omega ,  \label{21}
\end{equation}%
\begin{equation}
-\Delta _{p(x)}\overline{u}\geq f(\overline{u},\underline{v}),\text{ \ }%
-\Delta _{q(x)}\overline{v}\geq g(\underline{u},\overline{v})\text{ \ in }%
\Omega .  \label{21*}
\end{equation}
\end{proposition}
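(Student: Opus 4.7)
The plan is to verify each of the four inequalities by decomposing $\Omega = \Omega_\delta \cup (\Omega \setminus \overline{\Omega}_\delta)$ and invoking the explicit defining PDEs (\ref{80})--(\ref{83*}) on each piece, exploiting the constraint $\theta_1 \geq -\beta_1^-/\alpha_1^-$ (and its symmetric analog for $\theta_2$) together with the freedom to take $\lambda$ large.

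I begin with the super-solution inequality $-\Delta_{p(x)}\overline{u} \geq f(\overline{u}, \underline{v})$. On the interior $\Omega \setminus \overline{\Omega}_\delta$, (\ref{80}) gives $-\Delta_{p(x)}\overline{u} = \lambda$, and combining (H.$f$) with the available bound $\overline{u} \leq c_1 \lambda^{1/(p^--1)}$ and a positive lower bound on $\underline{v}$ yields $f(\overline{u}, \underline{v}) \leq C \lambda^{\alpha_1^+/(p^--1)}$; since $\alpha_1^+ < p^- - 1$ by (\ref{c2}), this is dominated by $\lambda$ for $\lambda$ sufficiently large. The delicate part is $\Omega_\delta$, where (\ref{80}) gives $-\Delta_{p(x)}\overline{u} = \lambda\,\overline{u}^{-\alpha_1(x)}$. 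Using $\overline{u} \leq c_0 d(x)^{\theta_1}$ and monotonicity of $t \mapsto t^a$ for $t < 1$, I would estimate
\begin{equation*}
\lambda\,\overline{u}^{-\alpha_1(x)} \;\geq\; C\,\lambda\,d(x)^{-\alpha_1^- \theta_1},
\end{equation*}
while (H.$f$) together with the lower bound $\underline{v} \geq c_3' d(x)$ (and $\beta_1 < 0$) gives
\begin{equation*}
f(\overline{u}, \underline{v}) \;\leq\; C'\,\lambda^{\alpha_1^+/(p^--1)}\,d(x)^{\beta_1^-}.
\end{equation*}
The required inequality thereby reduces to $\lambda^{1 - \alpha_1^+/(p^--1)}\,d(x)^{-\alpha_1^- \theta_1 - \beta_1^-} \geq C''$. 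The exponent $1 - \alpha_1^+/(p^--1)$ is strictly positive by (\ref{c2}), and the constraint $\theta_1 \geq -\beta_1^-/\alpha_1^-$ forces $-\alpha_1^- \theta_1 - \beta_1^- \leq 0$, so $d(x)^{-\alpha_1^- \theta_1 - \beta_1^-} \geq 1$ on $\Omega_\delta$; the inequality therefore holds for $\lambda$ large. The companion inequality $-\Delta_{q(x)}\overline{v} \geq g(\underline{u}, \overline{v})$ is treated symmetrically using $\theta_2 \geq -\alpha_2^-/\beta_2^-$.

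For the sub-solution inequalities $-\Delta_{p(x)}\underline{u} \leq f(\underline{u}, \overline{v})$ and its analog for $\underline{v}$: on $\Omega_\delta$ the left-hand side equals $-\lambda^{-1} < 0$, so the inequality holds automatically because $f > 0$. On $\Omega \setminus \overline{\Omega}_\delta$ the left-hand side equals $\lambda^{-1}$, which can be made arbitrarily small; positivity and continuity of $f$ on compact subsets of $(0, \infty)^2$, together with H($f,g$)$_2$ to control the regime where $\underline{u}$ is close to $0$, yields a lower bound $f(\underline{u}, \overline{v}) \geq c > 0$ that beats $\lambda^{-1}$ for $\lambda$ large.

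The main obstacle is the exponent bookkeeping near $\partial \Omega$: turning bounds of the form $s^{r(x)}$ into estimates with $r^\pm$ requires careful case analysis on the sign of $r$ and whether $s$ lies above or below $1$, and the dominant behavior of the various $\lambda$-dependent quantities must be tracked simultaneously. The key technical input is the choice of $\theta_1$ and $\theta_2$: the algebraic relation $-\alpha_1^- \theta_1 \geq \beta_1^-$ is precisely what is needed so that the singular behavior of $\lambda\,\overline{u}^{-\alpha_1(x)}$ near $\partial \Omega$ absorbs the boundary blow-up of $f(\overline{u}, \underline{v})$ coming from $\underline{v}^{\beta_1(x)}$.
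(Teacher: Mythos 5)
Your proposal follows essentially the same route as the paper: split $\Omega$ into $\Omega_\delta$ and $\Omega\setminus\overline{\Omega}_\delta$, use the defining problems (\ref{80})--(\ref{83*}) on each piece, invoke \textrm{H(}$f,g$\textrm{)}$_2$ to get the sub-solution inequalities (with the sign of $-\lambda^{-1}$ making $\Omega_\delta$ trivial), and for the super-solution inequalities combine \textrm{(H.}$f$\textrm{)}, \textrm{(H.}$g$\textrm{)} with the bounds (\ref{82}), (\ref{84}) and the constraints $\theta_1\geq -\beta_1^-/\alpha_1^-$, $\theta_2\geq -\alpha_2^-/\beta_2^-$ so that $d(x)^{\theta_1\alpha_1^-+\beta_1^-}$ stays bounded near $\partial\Omega$. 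The exponent bookkeeping and the role of $\lambda$ large match the paper's argument, so the proof is correct as an account of the intended argument.
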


\begin{proof}
For all $\lambda >0$ one has%
\begin{equation}
-\lambda ^{-1}\underline{u}^{-(p^{-}-1)}\leq 0<1\text{ \ and \ }-\lambda
^{-1}\underline{v}^{-(q^{-}-1)}\leq 0<1\text{ \ in }\Omega _{\delta }.
\label{32}
\end{equation}%
By (\ref{84}), it follows that%
\begin{equation}
\lambda ^{-1}\underline{u}^{-(p^{-}-1)}\leq \lambda
^{-1}(c_{3}d(x))^{-(p^{-}-1)}\leq \lambda ^{-1}(c_{3}\delta
)^{-(p^{-}-1)}\leq 1\text{ \ in }\Omega \backslash \overline{\Omega }%
_{\delta },  \label{35}
\end{equation}%
and%
\begin{equation}
\lambda ^{-1}\underline{v}^{-(q^{-}-1)}\leq \lambda ^{-1}(c_{3}^{\prime
}d(x))^{-(q^{-}-1)}\leq \lambda ^{-1}(c_{3}^{\prime }\delta
)^{-(q^{-}-1)}\leq 1\text{ \ in }\Omega \backslash \overline{\Omega }%
_{\delta },  \label{35*}
\end{equation}%
provided that $\lambda $ is sufficiently large. Another hand, by \textrm{H(}$%
f,g$\textrm{)}$_{2}$\textrm{\ }there exist constants $\rho ,\bar{\rho}>0$
such that%
\begin{equation}
\begin{array}{l}
f(s_{1},s_{2})\geq s_{1}^{p^{-}-1}\text{, \ for all }0<s_{1}<\rho ,\text{\
for all }0<s_{2}<\lambda ^{\frac{1}{p^{-}-1}},%
\end{array}
\label{36}
\end{equation}%
and%
\begin{equation}
\begin{array}{l}
g(s_{1},s_{2})\geq s_{2}^{q^{-}-1}\text{, \ for all }0<s_{1}\leq \lambda ^{%
\frac{1}{q^{-}-1}},\text{\ for all }0<s_{2}<\bar{\rho}.%
\end{array}
\label{36*}
\end{equation}%
Then, for $\lambda >0$ sufficiently large so that 
\begin{equation*}
\max \{c_{4}\lambda ^{\frac{-1}{p^{-}-1}},c_{4}^{\prime }\lambda ^{\frac{-1}{%
q^{-}-1}}\}<\min \{\rho ,\bar{\rho}\},
\end{equation*}%
combining (\ref{32}) - (\ref{36*}) together, we infer that (\ref{21}) holds
true.

Next, we show (\ref{21*}). By \textrm{(H.}$f$\textrm{)}, \textrm{(H.}$g$%
\textrm{)}, (\ref{84}), (\ref{1}) and (\ref{2}), it follows that%
\begin{equation}
\begin{array}{l}
f(\overline{u},\underline{v})\leq M_{1}(1+\overline{u}^{\alpha _{1}(x)})(1+%
\underline{v}^{\beta _{1}(x)}) \\ 
\leq M_{1}(1+c_{4}^{\alpha _{1}(x)}\lambda ^{\frac{\alpha _{1}^{+}}{p^{-}-1}%
})(1+(c_{3}^{\prime }d(x))^{\beta _{1}(x)})\leq \lambda \text{ in \ }\Omega
\backslash \overline{\Omega }_{\delta }%
\end{array}
\label{5}
\end{equation}%
and%
\begin{equation}
\begin{array}{l}
g(\underline{u},\overline{v})\leq M_{2}(1+\underline{u}^{\alpha _{2}(x)})(1+%
\overline{v}^{\beta _{2}(x)}) \\ 
\leq M_{2}\left( 1+(c_{3}d(x))^{\alpha _{2}(x)}\right) (1+(c_{4}^{\prime
})^{\beta _{2}(x)}\lambda ^{\frac{\beta _{2}^{+}}{q^{-}-1}})\leq \lambda 
\text{ \ in }\Omega \backslash \overline{\Omega }_{\delta }\text{.}%
\end{array}
\label{5*}
\end{equation}%
provided that $\lambda >0$ is large enough. Now we deal with the
corresponding estimates on $\Omega _{\delta }$. From \textrm{(H.}$f$\textrm{)%
}, \textrm{(H.}$g$\textrm{)}, (\ref{84}), (\ref{1}), (\ref{2}) and (\ref{3}%
), we get%
\begin{equation}
\begin{array}{l}
\overline{u}^{\alpha _{1}(x)}f(\overline{u},\underline{v})\leq M_{1}(%
\overline{u}^{\alpha _{1}(x)}+\overline{u}^{2\alpha _{1}(x)})(1+\underline{v}%
^{\beta _{1}(x)}) \\ 
\leq M_{1}\left( (c_{0}d(x)^{\theta _{1}})^{\alpha
_{1}(x)}+(c_{0}d(x)^{\theta _{1}})^{2\alpha _{1}(x)}\right) \left(
1+(c_{3}^{\prime }d(x))^{\beta _{1}(x)}\right) \\ 
\leq M_{1}\max \{(c_{0})^{\alpha _{1}(x)},(c_{0})^{2\alpha
_{1}(x)}\}(d(x)^{\theta _{1}\alpha _{1}(x)}+d(x)^{2\theta _{1}\alpha
_{1}(x)})\left( 1+(c_{3}^{\prime }d(x))^{\beta _{1}(x)}\right) \\ 
\leq C_{1}(d(x)^{\theta _{1}\alpha _{1}^{-}}+d(x)^{2\theta _{1}\alpha
_{1}^{-}})\left( 1+d(x)^{\beta _{1}^{-}}\right) \leq \lambda \text{ in \ }%
\Omega _{\delta }%
\end{array}
\label{6}
\end{equation}%
and similarly%
\begin{equation}
\begin{array}{l}
\overline{v}^{\beta _{2}(x)}g(\underline{u},\overline{v})\leq M_{2}(1+%
\underline{u}^{\alpha _{2}(x)})(\overline{v}^{\beta _{2}(x)}+\overline{v}%
^{2\beta _{2}(x)}) \\ 
\leq M_{2}\left( 1+(c_{3}d(x))^{\alpha _{2}(x)}\right) ((c_{0}^{\prime
}d(x)^{\theta _{2}})^{\beta _{2}(x)}+(c_{0}^{\prime }d(x)^{\theta
_{2}})^{2\beta _{2}(x)})\leq \lambda \text{ \ in }\Omega _{\delta },%
\end{array}
\label{6*}
\end{equation}%
provided that $\lambda >0$ is sufficiently large. Consequently, (\ref{5}), (%
\ref{5*}), (\ref{6}) and (\ref{6*}) allow to infer that (\ref{21*}) holds.
This ends the proof.
\end{proof}

\section{Proof of Theorem \protect\ref{T1}}

\label{S4}

For every $z_{1},z_{2}\in C_{0}^{1}(\Omega ),$ let us state the auxiliary
problem%
\begin{equation}
\left\{ 
\begin{array}{ll}
-\Delta _{p(x)}u=\tilde{f}(z_{1},z_{2}) & \text{in }\Omega , \\ 
-\Delta _{q(x)}v=\tilde{g}(z_{1},z_{2}) & \text{in }\Omega , \\ 
u,v=0 & \text{on }\partial \Omega ,%
\end{array}%
\right.  \tag{$P_{z}$}  \label{pz}
\end{equation}%
where 
\begin{equation}
\begin{array}{c}
\tilde{f}(z_{1},z_{2})=f(\tilde{z}_{1},\tilde{z}_{2})\text{ \ and }\tilde{g}%
(z_{1},z_{2})=g(\tilde{z}_{1},\tilde{z}_{2}),%
\end{array}
\label{38}
\end{equation}%
with%
\begin{equation}
\tilde{z}_{i}=\min \left\{ \max \{z_{i},c_{0}d(x)\},\text{ }R\right\} \text{
for }i=1,2.  \label{39}
\end{equation}%
On account of (\ref{39}) it follows that $c_{0}d(x)\leq \tilde{z}_{i}\leq R$
for $i=1,2$.

\vspace{0.5cm}

The next result establishes an a priori estimate for system (\ref{pz}). In
addition, it shows that solutions $(u,v)$ of problem (\ref{pz}) cannot occur
outside the rectangle $[c_{0}d(x),L_{R}]\times \lbrack c_{0}d(x),L_{R}],$
with a constant $L_{R}>0$ defined below.

\begin{proposition}
\label{P2}Assume \textrm{(H.}$f$\textrm{)}, \textrm{(H.}$g$\textrm{) }and (%
\ref{h1}) hold. Then all solutions $(u,v)$ of (\ref{pz}) belong to $%
C^{1,\gamma }(\overline{\Omega })\times C^{1,\gamma }(\overline{\Omega })$
for some $\gamma \in (0,1)$ and there is a positive constante $L_{R}$,
depending on $R$, such that%
\begin{equation}
\left\Vert u\right\Vert _{C^{1,\gamma }(\overline{\Omega })},\left\Vert
v\right\Vert _{C^{1,\gamma }(\overline{\Omega })}<L_{R}.  \label{31}
\end{equation}%
Moreover, it holds%
\begin{equation}
\begin{array}{c}
u(x),v(x)\geq c_{0}d(x)\text{ \ in }\Omega .%
\end{array}
\label{17}
\end{equation}
\end{proposition}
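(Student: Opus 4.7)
The plan is to treat the two conclusions (\ref{17}) and (\ref{31}) separately, because once (\ref{pz}) is seen as a pair of decoupled scalar equations with fixed right-hand sides (the functions $z_1,z_2$ enter only as frozen data through $\tilde{f},\tilde{g}$), the truncation (\ref{39}) makes the analysis almost routine.

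For the lower bound (\ref{17}), I would reproduce verbatim the comparison argument used for (\ref{63}) in Proposition \ref{P3}. Because (\ref{39}) forces $c_0 d(x)\leq \tilde{z}_i\leq R$, hypothesis H$(f,g)_1$ yields $\tilde{f}(z_1,z_2),\tilde{g}(z_1,z_2)\geq \sigma$, so any solution of (\ref{pz}) satisfies
$$-\Delta_{p(x)} u\geq \sigma=-\Delta_{p(x)} z_1,\qquad -\Delta_{q(x)} v\geq \sigma=-\Delta_{q(x)} z_2,$$
with $z_1,z_2$ the torsion-like functions in (\ref{64}). Since $u=v=z_1=z_2=0$ on $\partial\Omega$, the weak comparison principle for the $p(x)$- and $q(x)$-Laplacians gives $u\geq z_1$ and $v\geq z_2$; invoking (\ref{65}) and, if necessary, shrinking $c_0$ to $\min\{c_2,c_2'\}$, I conclude $u(x),v(x)\geq c_0 d(x)$.

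For (\ref{31}) I would first derive a uniform $L^s$-bound on the right-hand sides. Combining (H.$f$), (H.$g$) with $c_0 d(x)\leq \tilde{z}_i\leq R$ and the sign conditions (\ref{c1}), I get
$$\tilde{f}(z_1,z_2)\leq m_1\bigl(1+\tilde{z}_1^{\alpha_1(x)}\bigr)\bigl(1+\tilde{z}_2^{\beta_1(x)}\bigr)\leq C_R\bigl(1+d(x)^{\alpha_1^{-}}\bigr),$$
and the analogous estimate for $\tilde{g}$ controlled by $d(x)^{\beta_2^{-}}$. Because $-\alpha_1^{-},-\beta_2^{-}<1/N$ by (\ref{c1}), the singular factors $d(x)^{\alpha_1^{-}}$ and $d(x)^{\beta_2^{-}}$ belong to $L^{s}(\Omega)$ for some $s>N$, so $\tilde{f}(z_1,z_2)$ and $\tilde{g}(z_1,z_2)$ are bounded in $L^{s}(\Omega)$ by a constant depending only on $R$, $\Omega$, $p$, $q$, $m_1$, $m_2$, and the exponents.

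Finally, with this $L^{s}$-bound in hand, I would apply the $C^{1,\gamma}$-regularity result for the $p(x)$-Laplacian invoked in Proposition \ref{P3}, namely \cite[Lemma 2]{AM}, to each scalar equation of (\ref{pz}). This yields a constant $L_R>0$, depending on $R$, such that $\|u\|_{C^{1,\gamma}(\overline{\Omega})},\|v\|_{C^{1,\gamma}(\overline{\Omega})}<L_R$, completing the proof. The only genuine obstacle is the boundary singularity of $\tilde{f}$ and $\tilde{g}$ hidden in the truncation; the saving feature is precisely (\ref{c1}), which pushes the integrability exponent past $N$ and allows the cited regularity lemma to be used unchanged.
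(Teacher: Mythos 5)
Your proof is correct, and the lower bound $(\ref{17})$ is handled exactly as the paper does (the paper simply defers to the comparison argument of Proposition \ref{P3}, which is what you reproduce; note that, like the paper, you need \textrm{H(}$f,g$\textrm{)}$_{1}$ here even though it is not listed among the hypotheses of Proposition \ref{P2} --- it is available because the proposition is only invoked under the hypotheses of Theorem \ref{T1}). For the regularity bound $(\ref{31})$ you take a genuinely shorter route. The paper proceeds in two stages: first an $L^{\infty}$ bound obtained by a level-set truncation with the sets $U_{k,R},V_{k,R}$ and test functions $(u-kR)^{+},(v-kR)^{+}$, using only that $\tilde{f},\tilde{g}\in L^{N}(\Omega )$ (which needs $N\alpha _{1}^{-}>-1$); then it introduces auxiliary functions $w_{1},w_{2}$ solving problems with the same right-hand sides, gets their $C^{1,\gamma }$ bound from \cite[Lemma 3.1]{Hai}, and transfers regularity to $(u,v)$ via the identity $-\mathrm{div}(|\nabla u|^{p(x)-2}\nabla u-\nabla w_{1})=0$ and \cite[Theorem 1.2]{Fan2}. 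You instead push the integrability of the frozen right-hand side past $N$ (legitimate, since $-\alpha _{1}^{-}<1/N$ strictly gives $d(x)^{\alpha _{1}^{-}}\in L^{s}(\Omega )$ for some $s>N$) and feed the two decoupled scalar equations directly into the regularity lemma \cite[Lemma 2]{AM}. This is exactly how the paper itself uses that lemma in the proofs of Proposition \ref{P3} and Theorem \ref{T1}, so your shortcut is sanctioned by the paper's own conventions; what you lose is only the self-contained derivation of the $L^{\infty}$ bound, which the paper's longer argument makes explicit, and what you gain is that the uniformity of $L_{R}$ in $(z_{1},z_{2})$ is immediate, since the $L^{s}$ bound on $\tilde{f},\tilde{g}$ depends only on $R$ and the data.
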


\begin{proof}
First, we prove the boundedness for solutions of (\ref{pz}) in $L^{\infty
}(\Omega )\times L^{\infty }(\Omega )$. To this end, we adapt the argument
which proves \cite[Lemma 2]{AM}. For each $k\in \mathbb{N}$, set 
\begin{equation*}
U_{k,R}=\{x\in \Omega \,:\,u(x)>kR\}\text{ \ and \ }V_{k,R}=\{x\in \Omega
\,:\,v(x)>kR\},
\end{equation*}%
where the constant $R>0$ is given by Proposition \ref{P3}. Since $u,v\in
L^{1}(\Omega )$, we have 
\begin{equation}
|U_{k,R}|,|V_{k,R}|\rightarrow 0\quad \mbox{as}\quad k\rightarrow +\infty .
\label{E3}
\end{equation}%
Using $(u-kR)^{+}$ and $(v-kR)^{+}$ as a test function in (\ref{pz}), we get 
\begin{equation}
\left\{ 
\begin{array}{c}
\int_{U_{k,R}}|\nabla u|^{p}dx=\int_{U_{k,R}}f(\tilde{z}_{1},\tilde{z}%
_{2})(u-kR)^{+}dx \\ 
\int_{V_{k,R}}|\nabla v|^{q}dx=\int_{V_{k,R}}g(\tilde{z}_{1},\tilde{z}%
_{2})(v-kR)^{+}dx.%
\end{array}%
\right.  \label{79}
\end{equation}%
By \textrm{(H.}$f$\textrm{) }and (\ref{39}) observe that%
\begin{equation*}
\begin{array}{l}
\int_{\Omega }|f(\tilde{z}_{1},\tilde{z}_{2})|^{N}\text{ }dx\leq
C_{1}\int_{\Omega }(1+\tilde{z}_{1}^{N\alpha _{1}(x)})(1+\tilde{z}%
_{2}^{N\beta _{1}(x)})dx \\ 
\leq C_{1}(1+R^{N\beta ^{+}})\int_{\Omega }(1+(c_{0}d(x))^{N\alpha
_{1}(x)})dx\leq \hat{C}_{1}\int_{\Omega }(1+d(x)^{N\alpha _{1}^{-}})dx.%
\end{array}%
\end{equation*}%
Since $N\alpha _{1}^{-}>-1$ (see (\ref{c1})), \cite[Lemma in page 726]{LM}
garantees that%
\begin{equation*}
\int_{\Omega }d(x)^{N\alpha _{1}^{-}}dx<\infty .
\end{equation*}

Then, it follows that $f(\tilde{z}_{1},\tilde{z}_{2})\in L^{N}(\Omega )$ and
therefore%
\begin{equation}
\left\Vert f(\tilde{z}_{1},\tilde{z}_{2})\right\Vert
_{L^{N}(U_{k,R})}\rightarrow 0\text{ \ as }k\rightarrow +\infty .  \label{E1}
\end{equation}%
Similarly, we obtain%
\begin{equation}
\left\Vert g(\tilde{z}_{1},\tilde{z}_{2})\right\Vert
_{L^{N}(V_{k,R})}\rightarrow 0\text{ \ as }k\rightarrow +\infty .
\end{equation}%
Now, proceeding analogously to the proof of \cite[Lemma 2]{AM} provides a
constant $k_{0}\geq 1$ such that 
\begin{equation*}
|u(x)|\text{, }|v(x)|\leq k_{0}R\text{ \ a.e in }\Omega .
\end{equation*}%
Consider now functions $w_{1}$ and $w_{2}$ defined by%
\begin{equation}
\left\{ 
\begin{array}{ll}
-\Delta w_{1}=\tilde{f}(z_{1},z_{2}) & \text{in }\Omega \\ 
w_{1}=0 & \text{on }\partial \Omega%
\end{array}%
\right. \text{ \ and \ }\left\{ 
\begin{array}{ll}
-\Delta w_{2}=\tilde{g}(z_{1},z_{2}) & \text{in }\Omega \\ 
w_{2}=0 & \text{on }\partial \Omega .%
\end{array}%
\right.  \label{3*}
\end{equation}%
On account of (\ref{38}), \textrm{(H.}$f$\textrm{)}, \textrm{(H.}$g$\textrm{)%
},\textrm{\ }(\ref{39}), (\ref{h1}) and (\ref{c1}), one has%
\begin{equation}
\begin{array}{c}
\tilde{f}(z_{1},z_{2})\leq C_{2}d(x)^{\alpha _{1}^{-}}\text{ \ and \ }\tilde{%
g}(z_{1},z_{2})\leq C_{2}^{\prime }d(x)^{\beta _{2}^{-}}\text{ \ in }\Omega ,%
\end{array}
\label{42}
\end{equation}%
for some positive constants $C_{2}$ and $C_{2}^{\prime }$. On the basis of (%
\ref{c1}) and Thanks to \cite[Lemma in page 726]{LM}, the right-hand side of
problems in (\ref{3*}) belongs to $H^{-1}(\Omega )$. Consequently, the
Minty-Browder theorem (see \cite[Theorem V.15]{B}) implies the existence and
uniqueness of $w_{1}$ and $w_{2}$ in (\ref{3*}). Moreover, bearing in mind (%
\ref{c1}) and (\ref{42}), the regularity theory found in \cite[Lemma 3.1]%
{Hai} implies that $w_{1}$ and $w_{2}$ are bounded in $C^{1,\gamma }(%
\overline{\Omega })$, for certain $\gamma \in (0,1)$.

Thereby, subtracting (\ref{3*}) from (\ref{pz}) yields 
\begin{equation*}
-div(|\nabla u|^{p(x)-2}\nabla u-\nabla w_{1})=0\text{ \ and \ }-div(|\nabla
v|^{q(x)-2}\nabla v-\nabla w_{2})=0,
\end{equation*}%
and the $C^{1,\alpha }$-boundedness of $u$ and $v$ follows from \cite[%
Theorem 1.2]{Fan2}. Summarizing, we have obtained that solutions $(u,v)$ of (%
\ref{pz}) belong to $C^{1,\gamma }(\overline{\Omega })\times C^{1,\gamma }(%
\overline{\Omega })$, for certain $\gamma \in (0,1)$, and there exists a
constant $L_{R}>0$ such that (\ref{31}) holds. Furthermore, a quite similar
argument showing the second part of Proposition \ref{P3} leads to (\ref{17}%
). This completes the proof.
\end{proof}

\bigskip

Next we prove the existence result for cooperative system (\ref{p}).

\begin{proof}[Proof of Theorem \protect\ref{T1}]
Denote by%
\begin{equation*}
\begin{array}{c}
\mathcal{B}(0,L_{R})=\{(u,v)\in C^{1}(\overline{\Omega })\times C^{1}(%
\overline{\Omega }):\left\Vert u\right\Vert _{C^{1}(\overline{\Omega }%
)}+\left\Vert v\right\Vert _{C^{1}(\overline{\Omega })}<L_{R}\}%
\end{array}%
\end{equation*}%
and%
\begin{equation*}
\begin{array}{c}
\mathcal{O}=\{(u,v)\in \mathcal{B}(0,L_{R}):u(x),v(x)\geq c_{0}d(x)\text{ \
in }\Omega \}.%
\end{array}%
\end{equation*}%
Let us introduce the operator $\mathcal{P}:\mathcal{O}\rightarrow C(%
\overline{\Omega })\times C(\overline{\Omega })$ by $\mathcal{P}%
(z_{1},z_{2})=(u,v)$, where $(u,v)$ is the solution of problem (\ref{pz}).
Bearing in mind (\ref{42}) and (\ref{c1}), the Minty-Browder theorem
together with \cite[Lemma 2]{AM} garantee that problem (\ref{pz}) has a
unique solution $(u,v)$ in $C^{1,\gamma }(\overline{\Omega })\times
C^{1,\gamma }(\overline{\Omega })$, for certain $\gamma \in (0,1)$. This
ensures the operator $\mathcal{P}$ is well defined. Moreover, analysis
similar to that in the proof of Theorem $3$ in \cite{AM} imply that $%
\mathcal{P}$ is continuous and compact operator. On the other hand,
according to Proposition \ref{P2}, it follows that $\mathcal{O}$ is
invariant by $\mathcal{P}$, that is, $\mathcal{P}(\mathcal{O})\subset 
\mathcal{O}$. Therefore we are in a position to apply Schauder's fixed point
Theorem to the set $\mathcal{O}$ and the map $\mathcal{P}:\mathcal{O}%
\rightarrow \mathcal{O}$. This ensures the existence of $(u,v)\in \mathcal{O}
$ satisfying $\mathcal{P}(u,v)=(u,v)$, that is, $(u,v)\in C^{1}(\overline{%
\Omega })\times C^{1}(\overline{\Omega })$ is a solution of problem%
\begin{equation*}
\left\{ 
\begin{array}{ll}
-\Delta _{p(x)}u=\tilde{f}(u,v) & \text{in }\Omega , \\ 
-\Delta _{q(x)}v=\tilde{g}(u,v) & \text{in }\Omega , \\ 
u,v=0 & \text{on }\partial \Omega .%
\end{array}%
\right.
\end{equation*}%
Finally, thank's to proposition \ref{P3}, it turns out that $(u,v)\in C^{1}(%
\overline{\Omega })\times C^{1}(\overline{\Omega })$ is a (positive)
solution of problem (\ref{p}).
\end{proof}

\section{Proof of Theorem \protect\ref{T3}}

\label{S5}

The proof is based on Schauder's fixed point Theorem. Using the functions $(%
\underline{u},\underline{v})$ and $(\overline{u},\overline{v})$ given in (%
\ref{80}), (\ref{80*}), (\ref{83}) and (\ref{83*}) let introduce the set 
\begin{equation*}
\mathcal{K}=\left\{ (y_{1},y_{2})\in C(\overline{\Omega })\times C(\overline{%
\Omega }):\underline{u}\leq y_{1}\leq \overline{u}\text{ and }\underline{v}%
\leq y_{2}\leq \overline{v}\text{ in }\Omega \right\} ,
\end{equation*}%
which is closed, bounded and convex in $C(\overline{\Omega })\times C(%
\overline{\Omega })$. Then we define the operator $\mathcal{T}:\mathcal{K}%
\rightarrow C(\overline{\Omega })\times C(\overline{\Omega })$ by $\mathcal{T%
}(y_{1},y_{2})=(u,v)$, where $(u,v)$ is required to satisfy 
\begin{equation}
\left\{ 
\begin{array}{l}
-\Delta _{p(x)}u=f(y_{1},y_{2})\text{ in }\Omega \\ 
-\Delta _{q(x)}v=g(y_{1},y_{2})\text{ in }\Omega \\ 
u,v=0\text{ on }\partial \Omega .%
\end{array}%
\right.  \tag{$P_{y}$}  \label{pr}
\end{equation}%
For $(y_{1},y_{2})\in \mathcal{K}$, we derive from \textrm{(H.}$f$\textrm{)}%
, $\mathrm{(H.}g\mathrm{)}$, (\ref{1}), (\ref{2}), (\ref{82}), and (\ref{84}%
) the estimates 
\begin{equation}
\begin{array}{l}
f(y_{1},y_{2})\leq m_{1}(1+\overline{u}^{\alpha _{1}(x)})(1+\underline{v}%
^{\beta _{1}(x)})\leq C_{1}d(x)^{\beta _{1}(x)}\text{ \ in }\Omega%
\end{array}
\label{28}
\end{equation}%
and 
\begin{equation}
\begin{array}{l}
g(y_{1},y_{2})\leq m_{2}(1+\underline{u}^{\alpha _{2}(x)})(1+\overline{v}%
^{\beta _{2}(x)})\leq C_{2}d(x)^{\alpha _{2}(x)}\text{ \ in }\Omega ,%
\end{array}
\label{29}
\end{equation}%
with positive constants $C_{1}$, $C_{2}$. We point out that estimates (\ref%
{28}) and (\ref{29}) combined with (\ref{c2}) and (\ref{c2*}) enable us to
deduce that $f(y_{1},y_{2})\in W^{-1,p^{\prime }(x)}(\Omega )$ and $%
g(y_{1},y_{2})\in W^{-1,q^{\prime }(x)}(\Omega )$. Then the unique
solvability of $(u,v)$ in (\ref{pr}) is readily derived from Minty-Browder
theorem (see, e.g., \cite{B}). Hence, the operator $\mathcal{T}$ is well
defined.

Using the regularity theory up to the boundary (see \cite[Lemma 2]{AM}), it
follows that $(u,v)\in C^{1,\beta }(\overline{\Omega })\times C^{1,\beta }(%
\overline{\Omega })$, with some $\beta \in (0,1)$, and there is a constant $%
M>0$ such that $\Vert u\Vert _{C^{1,\beta }(\overline{\Omega })},\Vert
v\Vert _{C^{1,\beta }(\overline{\Omega })}\leq M,$ whenever $(u,v)=\mathcal{T%
}(y_{1},y_{2})$ with $(y_{1},y_{2})\in \mathcal{K}$. Then, analysis similar
to that in the proof of \cite[Theorem $3$]{AM} imply that $\mathcal{T}$ is
continuous and compact operator.

The next step in the proof is to show that $\mathcal{T}(\mathcal{K})\subset 
\mathcal{K}$. Let $(y_{1},y_{2})\in \mathcal{K}$ and denote $\left(
u,v\right) =\mathcal{T}(y_{1},y_{2})$. Using the definitions of $\mathcal{K}$
and $\mathcal{T}$, on the basis of Proposition \ref{P1}, $(\mathrm{H}.f)$
and $(\mathrm{H.}g)$, it follows that 
\begin{equation*}
\begin{array}{l}
-\Delta _{p(x)}u(x)=f(y_{1}(x),y_{2}(x))\leq f(\overline{u}(x),\underline{v}%
(x))\leq -\Delta _{p(x)}\overline{u}(x)\text{ in }\Omega ,%
\end{array}%
\end{equation*}%
and similarly 
\begin{equation*}
\begin{array}{l}
-\Delta _{q(x)}v(x)=g(y_{1}(x),y_{2}(x))\leq g(\underline{u}(x),\overline{v}%
(x))\leq -\Delta _{q(x)}\overline{v}(x)\text{ in }\Omega .%
\end{array}%
\end{equation*}%
Proceeding in the same way, via Proposition \ref{P1} and hypotheses $(%
\mathrm{H}.f)$, $(\mathrm{H.}g)$, leads to 
\begin{equation*}
\begin{array}{l}
-\Delta _{p(x)}u(x)=f(y_{1}(x),y_{2}(x))\geq f(\underline{u},\overline{v}%
)\geq -\Delta _{p(x)}\underline{u}(x)\text{ \ in }\Omega ,%
\end{array}%
\end{equation*}%
and similarly 
\begin{equation*}
\begin{array}{l}
-\Delta _{q(x)}v(x)=g(y_{1}(x),y_{2}(x))\geq g(\overline{u},\underline{v}%
)\geq -\Delta _{q(x)}\underline{v}(x)\text{ in }\Omega .%
\end{array}%
\end{equation*}%
Then from the strict monotonicity of the operators $-\Delta _{p(x)}$ and $%
-\Delta _{q(x)}$ we get that $\left( u,v\right) \in \mathcal{K}$, which
establishes that $\mathcal{T}(\mathcal{K})\subset \mathcal{K}$. Therefore we
are in a position to apply Schauder's fixed point Theorem to the set $%
\mathcal{K}$ and the map $\mathcal{T}:\mathcal{K}\rightarrow \mathcal{K}$.
This ensures the existence of $(u,v)\in \mathcal{K}$ satisfying $(u,v)=%
\mathcal{T}(u,v).$ Moreover, because the solution $(u,v)\in \mathcal{K}$ and 
$(\mathrm{H}.f),$ $(\mathrm{H.}g),$ (\ref{c2}) and (\ref{c2*}) are
fulfilled, we conclude from \cite[Lemma 2]{AM} that $(u,v)\in C^{1}(%
\overline{\Omega })\times C^{1}(\overline{\Omega })$. This ends the proof.

\end{document}